\begin{document}
\title{Ergodic Properties of $k$-Free\\ Integers in Number Fields}
\author{Francesco Cellarosi\footnote{University of Illinois at Urbana-Champaign. IL, U.S.A. \texttt{fcellaro@illinois.edu}},  Ilya Vinogradov\footnote{University of Bristol. U.K. \texttt{ilya.vinogradov@bristol.ac.uk}}}
\maketitle

\begin{abstract}
Let $K/\Q$ be a 
 degree $d$ extension. Inside the ring of integers $\roi$ we define the set of $k$-free integers $\mathcal F_k$ and 
a natural $\roi$-action on the space of binary $\roi$-indexed sequences, equipped with an $\roi$-invariant probability measure associated to $\mathcal F_k$.
We prove that this action is ergodic, has pure point spectrum, and is isomorphic to a $\Z^d$-action on a compact abelian group. In particular, it is not weakly mixing and has zero measure-theoretical entropy. This work generalizes the work of Cellarosi and Sinai [\emph{J. Eur. Math. Soc. (JEMS)} 15 (2013), no. 4, 1343--1374] that considered the case $K=\Q$ and $k=2$.
\end{abstract}

\begin{keywords} square-free and $k$-free integers in number fields, correlation functions, group actions with pure point spectrum, ergodicity, isomorphism of group actions. {\bf MSC}: 37A35, 37A45, 11R04, 11N25, 37C85, 28D15.
\end{keywords}

\section{Introduction}

It is an interesting question to study ``randomness'' of a given deterministic sequence. For a typical sequence coming from a chaotic dynamical system, such as doubling modulo one, one expects strong statistical properties, while for a circle rotation such properties cannot be expected. Of particular interest in this setting is the M\"obius sequence, $\{\mu(n)\}_{n\ge 1}$ defined as 
\be
\mu(n)=\begin{cases}1,&\mbox{if $n=1$};\\ 0,&\mbox{if $n$ is not square-free;}\\(-1)^m,&\mbox{if $n$ is the product of $m$ distinct primes.}\end{cases}
\nonumber
\ee
It is well known that 
\be\sum_{n\le N}\mu(n)=o(N),\label{sum-mu-PNT}\ee
suggesting that $\{\mu(n)\}_{n\geq 1}$ is reminiscent of a sequence of zero mean iid random variables, the above statement being the Law of Large Numbers for such a sequence. 

A related sequence, $\{\mu^2(n)\}_{n\geq 1}$, has been investigated by Sinai and the first author \cite{Cellarosi-Sinai-JEMS}. Their main result is that $\{\mu^2(n)\}_{n\geq 1}$, which is a sequence of zeros and ones, is generic for an ergodic subshift of infinite type on $\{0,1\}^\Z$ with pure point spectrum (see Section \ref{sec:purepoint} for more details). Such systems had been studied by von Neumann and Halmos \cite{vonNeumann-1932, Halmos-vonNeumann-1942}, and their statistical properties are well understood: they have zero measure-theoretical entropy and are not weakly mixing. In other words, the sequence $\{\mu^2(n)\}_{n\geq 1}$ has as little ``randomness'' as possible.

In the present paper we generalize the main result of \cite{Cellarosi-Sinai-JEMS} in two directions. Firstly, realizing that $\mu^2(n)$ is the indicator of square-free integers, we write $\mu^{(k)}(n)$ for the indicator of $k$-free numbers; that is, numbers that are not divisible by $p^k$ for every prime $p$. Secondly, we pass to a degree $d$ number field $K/\Q$ with ring of integers $\roi$ and define the M\"obius function on $\roi$. Since ideals $\fa$ in $\roi$ factor uniquely, we can define $\mu$  by 
\[\mu(\fa)=\begin{cases}1,&\mbox{if $\fa=\roi$};\\0,&\mbox{if $\fa$ is not square-free};\\(-1)^m,&\mbox{if $\fa$ is the product of $m$ distinct prime ideals}\end{cases}\]
and $\mu^{(k)}$
by 
\[\mu^{(k)}(\fa)=\begin{cases}1,&\mbox{$\fp^k\not\supseteq \fa$ for every prime ideal $\fp$};\\ 0,&\mbox{otherwise.}\end{cases}\]
Then, for $a\in\roi$, set $\mu(a)=\mu((a))$ and $\mu^{(k)}(a)=\mu^{(k)}((a))$. An ideal $\fa$ is \emph{$k$-free} if $\mu^{(k)}(\fa)=1$, while an integer $a\in\roi$ is \emph{$k$-free} if the principal ideal $(a)$ is $k$-free, and we denote the set of $k$-free integers in $\roi$ by $\mathcal F_k$. 
Thus $\{\mu^{(k)}(a)\}_{a\in\roi}$ is an $\roi$-indexed sequence of zeros and ones.

By an \emph{$\roi$-subshift} we mean a shift-invariant probability measure $\P$ on $X=\{0,1\}^{\roi}$ or, equivalently an action $\roi\curvearrowright(X,\P)$.  Let $\iota\colon(\Z^d,+)\to(\roi,+)$ be a group isomorphism, where $d$ is the degree of the extension $K/\Q$; it is unique up to multiplication by an element of $\mathrm{Aut}(\Z^d)$. The group $\Z^d$ acts via $\iota$ on the space of $\roi$-indexed sequences by $d$ commuting translations, and every $\roi$-subshift corresponds to a $\Z^d$-subshift. 
Let $B_x$ denote the ball of radius $x$ centered at the identity with respect to the $L^1$ norm induced on $\roi$ after identification with $\Z^d$ inside $\R^d$ via $\iota$. 
We say that a sequence $z=\{z(a)\}_{a\in\roi}\in\{0,1\}^{\roi}$ is \emph{generic} for an ergodic $\roi$-subshift $\P$ if the 
 ergodic theorem 
holds for $z$, i.e.\ for every $a_1,\ldots,a_r\in\roi$,
\beq\lim_{x\to\infty}\frac{1}{\#B_x}\sum_{a\in B_x} 
z(a+a_1)\cdots z(a+a_r)=\P\{w
\in\{0,1\}^{\roi}\colon 
w(a_1)=\ldots=w(a_r)=1\}\label{genericity}.
\eeq
In other words, genericity means that the frequency of every finite block equals the measure of the corresponding cylinder according to the subshift. This notion does not depend on the choice of $\iota$. 
A sequence $z$ for which the limit on the LHS of \eqref{genericity} exists is called \emph{stationary}. 
Given a stationary sequence $z$, the subshift $\P$ satisfying \eqref{genericity} is uniquely defined by Kolmogorov consistency \cite{Karatzas_Shreve, Koralov_Sinai} up to sets of measure zero, and in particular does not depend on $\iota$. 
Our main theorem states that the sequence $\{\mu^{(k)}(a)\}_{a\in\roi}$ is stationary and, more importantly, that the corresponding subshift is ergodic and has pure point spectrum. 

\begin{theorem}[Main Theorem, first version]\label{thm-1-intro}
Let $K/\Q$ be a degree $d$ extension. 
\begin{itemize}
 \item[(i)]
There exists a unique $\roi$-subshift $\Pi$ 
such that 
the sequence $\{\mu^{(k)}(a)\}_{a\in\roi}$ is 
generic for $\Pi$. This subshift is ergodic and has pure point spectrum. 

\item[(ii)] The $\roi$-subshift $\Pi$ is isomorphic to an action of $\Z^d$ by commuting translations on a compact abelian group equipped with the Haar measure. 
\end{itemize}
\end{theorem}

The proof of Theorem \ref{thm-1-intro} and of its full version Theorem \ref{main-thm-second-version} explicitly constructs the pure point subshift $\Pi$. The argument consists of three steps. 

First, we show the stationarity of the sequence $\{\mu^{(k)}(a)\}_{a\in\roi}$ by proving the existence of the asymptotic frequencies (correlation functions)
\beq c_{r+1}(a_1,\ldots,a_r)=\lim_{x\to\infty}\frac{1}{\#B_x}\sum_{a\in B_x}\mu^{(k)}(a)\mu^{(k)}(a+a_1)\cdots\mu^{(k)}(a+a_r),\label{eq:correlation}\eeq

We compute $c_{r+1}(a_1, \dots, a_r)$ explicitly (they do not depend on $\iota$) and give an error term for finite $x$ in Theorem \ref{th:M=S+O}.
This theorem is of independent interest, along with other explicit formul\ae\ given in Section \ref{sec3} (e.g.\ Proposition \ref{lem:Hall} generalizing a theorem by Hall \cite{Hall-1989}). 

A particular case  of \eqref{eq:correlation} is $c_2(0)=1/\zeta_K(k)$ (Corollary \ref{cor-density-k-free}), stating that the density of $k$-free integers in $\roi$ is $1/\zeta_K(k)$, where $\zeta_K$ is the Dedekind zeta function for the number field $K/\Q$. For $K=\Q$, the study of the average in \eqref{eq:correlation} as $x\to\infty$ is classical, see  \cite{Mirsky-1949, Hall-1989, Tsang-1986, Heath-Brown-1984}.

There is another notion of $k$-freeness for points in an arbitrary lattice studied by Baake, Moody, and Pleasants \cite{Baake-Moody-Pleasants-2000} and by Pleasants and Huck \cite{Pleasants-Huck}, for which the second correlation function, along with entropies and diffraction spectra, has been computed explicitly. This notion of $k$-freeness agrees with the one discussed above only when $K=\Q$.


The next step is to construct the compact abelian group 
\[\G=\prod_{\fp} \roi/\fp^2,\]
where the direct product ranges over prime ideals $\fp$ in $\roi$. We do this in Section \ref{Lambda-and-the-action-on-G} using only the \emph{second} correlation function, Bochner theorem, and Pontryagin duality. Since ideals thought of as additive subgroups have finite index, each factor $\roi/\fp^2$ is a finite group under addition. The Haar measure on $\G$ is simply the product of the counting measures on each factor. By identifying $\roi$ with $\Z^d$ as a group, and by choosing a basis for $\Z^d$, we get an action $\Z^d\curvearrowright(\G,\mathrm{Haar})$.
By construction, the spectrum of this action is pure-point, given by the countable group $\Lambda=\hat{\G}$ which can be identified with a subset of the $d$-dimensional torus.

In the third step (Section \ref{sec-spectrum-of-action}), we consider the unique probability measure $\Pi$ on $X=\{0,1\}^{\roi}$ whose finite dimensional marginals agree with the correlation functions above:
for every $r\geq0$ and every $a_0, a_1,\ldots,a_r\in\roi$
\beq
\label{eq:measurePi}
\Pi\left\{x\in X\colon  x(a_0)=x(a_1)=\ldots=x(a_r)=1\right\}=
c_{r+1}(a_1-a_0, a_2-a_0,\ldots, a_r-a_0), 
\eeq
up to normalization.
This defines a unique $\roi$-subshift (an action $\roi\curvearrowright(X,\Pi)$) for which the $d$-dimensional sequence $\{\mu^{(k)}(a)\}_{a\in\roi}$ is generic.
%
%
A substantial part of Section \ref{sec-spectrum-of-action} is dedicated to showing that the spectrum of the action $\roi\curvearrowright(X,\Pi)$ is given by $\Lambda$ (Theorem \ref{main-thm-second-version}). The method employed is constructive and uses explicit formul\ae\ for the two and three point correlation functions. 
Then, we apply a theorem of Mackey's \cite{Mackey-1964}, which states that two actions with pure point spectrum are isomorphic if and only if they are isospectral.
Since we know that $\Z^d\curvearrowright(\G,\mathrm{Haar})$ has spectrum $\Lambda$, Theorem \ref{thm-1-intro} follows from the isomorphism.
%
%

%
A consequence of the Main Theorem is the
\begin{corollary}\label{main-cor}
The subshift $\roi\curvearrowright(X,\Pi)$ in Theorem \ref{thm-1-intro} 
is not weakly mixing and it has zero measure-theoretical entropy.
\end{corollary}
The corollary follows immediately: see, e.g. \cite{Bergelson-Gorodnik-2004} to get absence of weak mixing and \cite{Zimmer-1976} to get zero measure-theoretical entropy. In the case of rational integers Corollary \ref{main-cor} was also proven by Sarnak \cite{Sarnak-Mobius-lectures}.

Corollary \ref{main-cor} suggests that any randomness in the M\"obius function comes from the distribution of $\pm 1$'s, and not from the locations of zeros. 
In the context of rational integers this is expressed by a generalization of (\ref{sum-mu-PNT}):
\begin{conjecture}[Chowla] For every $n_1,\ldots, n_r\in\N$ and $k_1,\ldots, k_r\in\{1,2\}$ not all even
\[\sum_{n=1}^N\mu^{k_1}(n+n_1)\mu^{k_2}(n+n_2)\cdots\mu^{k_r}(n+n_r)=o(N)\]
as $N\to\infty$.
\end{conjecture}
This conjecture, whose only proven instance is  \eqref{sum-mu-PNT}, implies a recent conjecture by Sarnak:
\begin{conjecture}[Sarnak, \cite{Sarnak-Mobius-lectures}]\label{conj-Sarnak} Let $(X,T)$ be a compact topological dynamical system with zero topological entropy. Let $\xi(n)=f(T^n x)$, where $x\in X$ and $f\in C(X,\C)$. Then the sequence $\{\xi(n)\}_{n\geq 1}$ does not correlate with the M\"obius function, i.e.
\[\sum_{n=1}^N\mu(n)\xi(n)=o(N)
\]
as $N\to\infty$.
\end{conjecture}
Sequences $\{\xi(n)\}_{n\geq 1}$ as above are called \emph{deterministic}. It is known that Conjecture \ref{conj-Sarnak} holds true for a wide class of deterministic sequences (see, e.g., \cite{Sarnak-Liu} and references therein).

It is worthwhile to stress the link between topological and measure-theoretical dynamics. For the case of $K=\Q$, it is known that the topological subshift of infinite type obtained by orbit closure of $\{\mu^2(n)\}_{n\geq 1}$ inside $\{0,1\}^{\Z}$ has positive topological entropy $\frac{6}{\pi^2}\log2$ (see \cite{Sarnak-Mobius-lectures}) and, by the variational  principle, one can find invariant probability measures with smaller measure-theoretical entropy. The measure $\Pi$ defined in \eqref{eq:measurePi}, which Sinai and the first author consider in \cite{Cellarosi-Sinai-JEMS} and for which $\{\mu^2(n)\}_{n\geq1}$ is generic, has zero entropy and is the Pinsker factor (largest zero entropy factor) of the measure of maximal entropy. In fact, Peckner \cite{Peckner-2012} showed that the measure of maximal entropy is a Bernoulli extension of $\Pi$. This means that the subshift of infinite type in Theorem \ref{thm-1-intro} is, at least in the case $K=\Q$, a building block for other relevant systems.
It would be of interest to extend this result to arbitrary number fields $K$, where the strictly 1-dimensional method of \cite{Peckner-2012} cannot be applied directly.

Section \ref{sec:examples} illustrates the results in the case of square-free Gaussian integers. Some background on ideals in $\roi$ and group actions with pure point spectrum is given in Section \ref{sec:background}, which may be skipped by readers familiar with these topics. In Section \ref{sec2-mirsky-like}, we show that limits of correlations exist for the sequence $\{\mu^{(k)}(a)\}_{a\in\roi}$. 
In Section \ref{Lambda-and-the-action-on-G}, we construct the spectral measure for the $\roi$-shift and an abstract dynamical system having this spectral measure. 
Section \ref{sec3} contains computations that are used in Section \ref{sec-spectrum-of-action} to prove Theorem \ref{thm-1-intro} and its more detailed version Theorem \ref{main-thm-second-version}. 


\section*{Acknowledgments}
We 
would like to thank 
Yakov G. Sinai for his encouragement and advice, Ali Altu\u{g} for  helpful suggestions, Michael Baake for bringing the references \cite{Baake-Moody-Pleasants-2000, Pleasants-Huck} to our attention, and Dmitry Kleinbock  and the referee for suggestions on improving the text. 
Special appreciation goes to Idris Assani, organizer of the \emph{Ergodic Theory Workshop} in Chapel Hill NC, where part of this work was carried out. 
The research leading to these results has received funding from the European Research Council under the European Union's Seventh Framework Programme (FP/2007--2013) / ERC Grant Agreement n.\ 291147.

\section{An Example: Square-Free Gaussian Integers}\label{sec:examples}
Let $K=\Q(i)$ and $k=2$. Then $\roi=\Z[i]$ is given by Gaussian integers. Since $\Z[i]$ is a principal ideal domain, every ideal is of the form $(a+bi)$ for some $a,b\in\Z$. Units in $\roi$ are $\pm1,\pm i$. The algebraic norm is given by $\an{(a+bi)}=a^2+b^2$. 
Prime ideals $\fp$ are of the form
\begin{itemize}
\item $(p)$, where $p\in\Z$ is a usual prime and $p\equiv3 \bmod4$,
\item $(a+bi)$, such that $a^2+b^2\in\Z$ is a usual prime (necessarily equal to 2 or congruent to $1 \bmod 4$).
\end{itemize}
The first few prime ideals $\fp$ (ordered by norm $N(\fp)$) are $(1+i), (1+2i), (1-2i), (3), (2+3i), (2-3i), \ldots$. 
Square-free Gaussian integers are shown Figure \ref{fig-squarefree-Gaussian}.

%
%
%
%
%
\begin{figure}[ht!]
\begin{center}
\includegraphics[width=14cm]{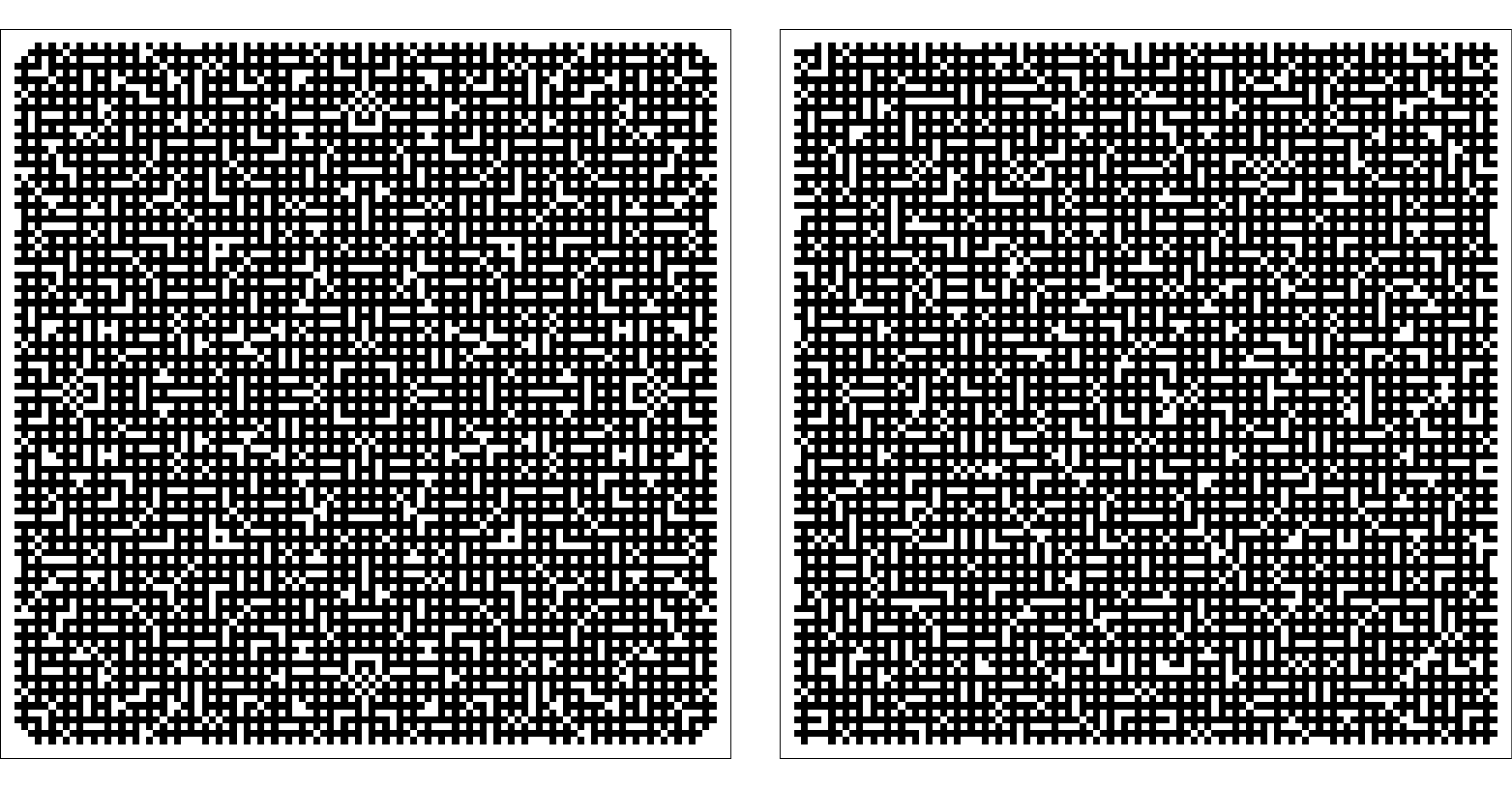}
\caption{{Square-free Gaussian integers. The square grid is identified with $\Z[i]\cong \Z^2$ and each square is colored black (resp. white) if it corresponds to a square-free (resp. not square-free) Gaussian integer. On the left: $\mathcal F_2\cap\{a+bi\}$ where $-50\leq a,b\leq 50$. Notice the dihedral $D_4$ symmetry. On the right, $\mathcal F_2\cap\{a+bi\}$ where $10^{12}\leq a\leq 10^{12}+100$ and $10^{15}\leq b\leq 10^{15}+100$.}}\label{fig-squarefree-Gaussian}
\end{center}
\end{figure}

In this case we can write the Dedekind zeta function in terms of primes in $\Z$. For $\Re s>1$ we have
\begin{align}
\zeta_{\Q(i)}(s)&=\prod_{\fp}\left(1-N^{-s}(\fp)\right)=\nonumber\\
&=(1-2^{-s})^{-1}\prod_{p\equiv1\bmod4}(1-p^{-s})^{-2}\prod_{p\equiv3\bmod4}(1-p^{-2s})^{-1}=\zeta(s)\beta(s),\nonumber
\end{align}
where $\zeta$ denotes the Riemann zeta function and $\beta$ the Dirichlet beta function, 
\[\beta(s)=\sum_{n=0}^\infty\frac{(-1)^n}{(2n+1)^s}.\] 
The density of square-free Gaussian integers is $1/\zeta_{\Q(i)}(2)=\frac{6}{\pi^2 G}\approx0.6637$, where $G=\beta(2)$ is the Catalan constant.
%
%
%

Let us look at correlation functions. For example,
$c_4(1,i,1+i)=0$ because for every $a\in\Z[i]$ at least one of the four Gaussian integers $a$, $a+1$, $a+i$, $a+1+i$, is not square-free, since it is divisible by $2=(1+i)^2$. 
We will show in Proposition \ref{prop-Mirsky-like-formula} that
\[
c_{r+1}(a_1,\ldots,a_r)=\prod_{\fp}\left(1-\frac{D(\fp^2 \mid 0,a_1,\dots,a_r)}{\an{\fp^2}}\right),
\]
where 
$D(\fp^2 \mid 0,a_1,\dots,a_r)$ is the number of \emph{distinct} residue classes among $0+\fp^2, a_1+\fp^2,\ldots, a_r+\fp^2$ in $\Z[i]/\fp^2$.
%
%
The fact that $c_4(1,i,1+i)=0$ can be derived by the formula above. In fact, notice that $D(\fp^2\mid0,1,i,1+i)=4$ 
for all prime ideals $\fp$ and that there is a prime ideal, $\fp_1=(1+i)$, for which $\an{\fp_1^2}=4$.

On the other hand, $c_5(1,i,-1,-i)=\left(1-\frac{3}{4}\right)\prod_{\fp\neq\fp_1}(1-\frac{5}{\an{\fp^2}})>0$. In fact, among $
0+\fp_1^2,1+\fp_1^2,i+\fp_1^2,-1+\fp_1^2,-i+\fp_1^2$ there are only three distinct residue classes, 
whilst for every prime ideal $\fp\neq\fp_1$ we have $D(\fp^2\mid0,1,i,-1,-i)=5$ 
and $\an{\fp}\geq 5$. More precisely 
\[c_5(1,i,-1,-i)=\frac{1}{4}\prod_{p\equiv1\bmod 4}\!\left(1-\frac{5}{p^2}\right)^2\prod_{p\equiv3\bmod4}\!\left(1-\frac{5}{p^4}\right)\approx0.1303.\]


Let us identify $\Z^2$ and $\Z[i]$, via $\iota\colon (a,b)\mapsto a+bi$.
The $\Z[i]$-action on $(\{0,1\}^{\Z[i]},\Pi)$ is simply given by the two commuting translations $a+ib\mapsto (a+1)+bi$ and $a+bi\mapsto a+(b+1)i$, under which the probability measure $\Pi$ is invariant. By construction, the 2-dimensional sequence $\{\mu^2(a+ b i)\}_{a+b i\in\Z[i]}$ is generic for this action.
Consider now the group 
\[\G=\prod_\fp \Z[i]/\fp^2.\]
It is the direct product of finite abelian groups $\Z[i]/(1+i)^2$, $\Z[i]/(1+2i)^2$, $\Z[i]/(1-2i)^2$, $\Z[i]/(3)^2$, $\Z[i]/(2+3i)^2$, $\Z[i]/(2-3i)^2,\ldots$ 
and it is acted upon coordinate-wise by $\Z^2$ via $\iota$.

 \begin{figure}[ht!]
\begin{center}
\vspace{-.3cm}
\includegraphics[width=13.75cm]{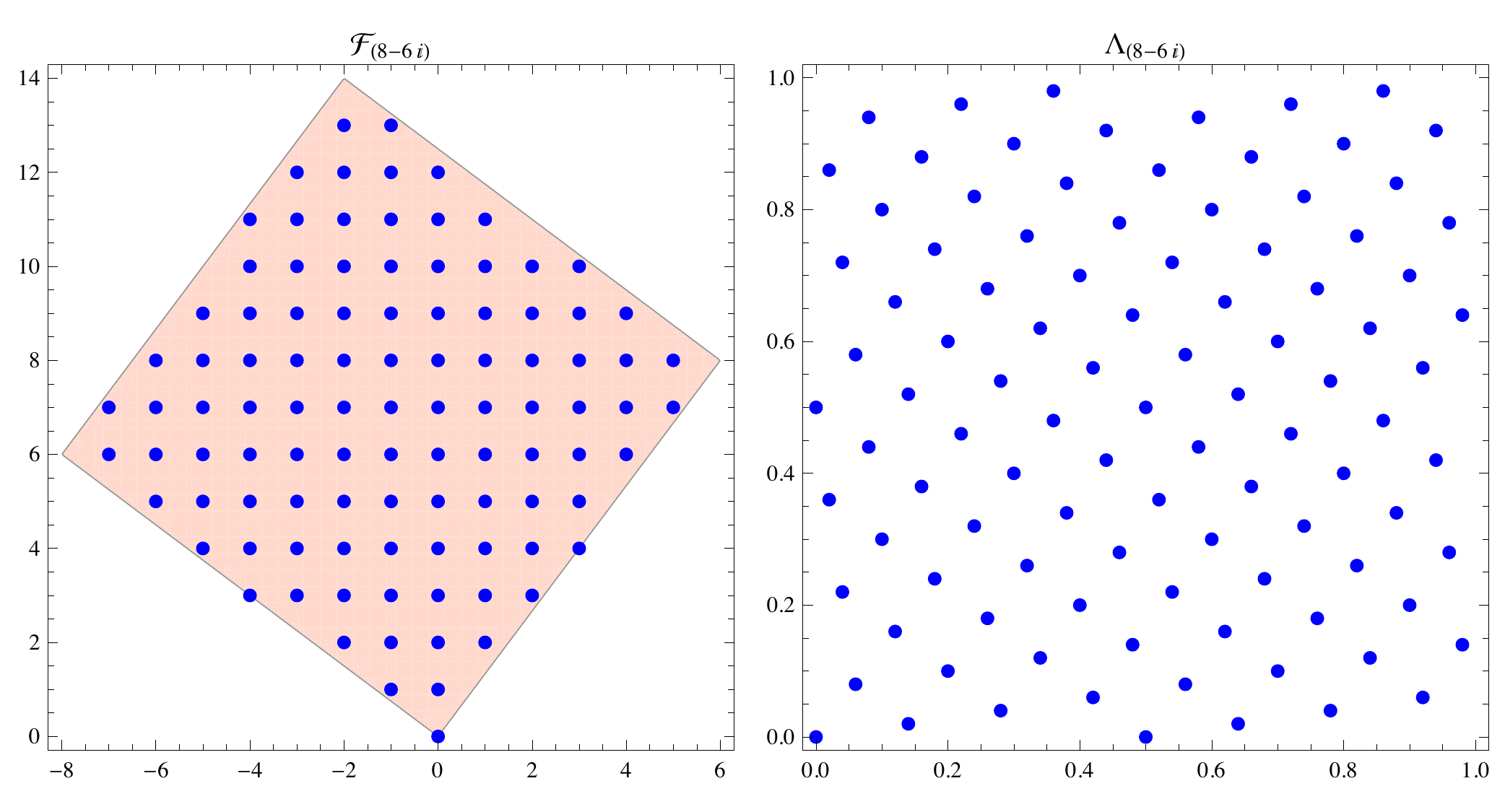}
\caption{On the left: the fundamental domain for the ideal $(8-6i)=(1+i)^2(1-2i)^2$, given by the square of vertices $0, 6+8i, -8+6i, -2+14i$. It contains $N(8-6i)=2^25^2=100$ Gaussian integers. On the right: the annihilator $(8-6i)^\perp$, identified with of a subset of rational points of the 2-torus whose coordinates have denominator $N(8-6i)=100$.}\label{fig-exFLambda}
\end{center}
\end{figure}

The Main Theorem \ref{main-thm-second-version} states that the two actions $\Z[i] \curvearrowright(\{0,1\}^{\Z[i]},\Pi)$ and $\Z^2 \curvearrowright (\G,\mathrm{Haar})$ are isomorphic. More precisely, they have pure point spectrum given by the discrete group $\Lambda=\hat{\G}$,
identified via $\iota$ with a subset of $\T^2$ (viewed as $\widehat {\Z^2}$). 


Here is the explicit construction of $\Lambda$ in this example.
For every square-free ideal $\fd\subseteq \Z[i]$, view $\fd$ as subgroup of $\Z^2$ and consider a fundamental domain $\mathfrak F_{\fd^2}$ for $\Z^2/\fd^2$, say the square with sides $w=(w_1,w_2)$, and $w'=(-w_2,w_1)$, where $w_1>0$, $w_2\geq0$, and $w_1^2+w_2^2=N(\fd)$. In this way $\#\mathfrak F_{\fd^2}=N(\fd^2)$. One can check that the annihilator 
of the ideal $\fd^2$ in $\widehat{\Z[i]}$ can be identified with a subset of $\widehat{\Z^2}=\T^2$ and written as $$(\fd^2)^\perp
=\frac{1}{N(\fd^2)}\left(\begin{array}{cc}-w_2 & w_1 \\w_1 & w_2\end{array}\right)\mathfrak F_{\fd^2}\subseteq \left\{\left(\frac{t_1}{N(\fd^2)},\frac{t_2}{N(\fd^2)}\right)\in\T^2,\:0\leq t_1,t_2<N(\fd^2)\right\}.$$
See Figure \ref{fig-exFLambda} for an example.

The spectrum $\Lambda$ is the subgroup of $\T^2$ obtained as union of the $(\fd^2)^\perp
$'s as above, and is shown in Figure \ref{spectrumLambda}.
\newpage
\begin{figure}[h!]
\begin{center}
\hspace{-.3cm}
\includegraphics[width=16.75cm]{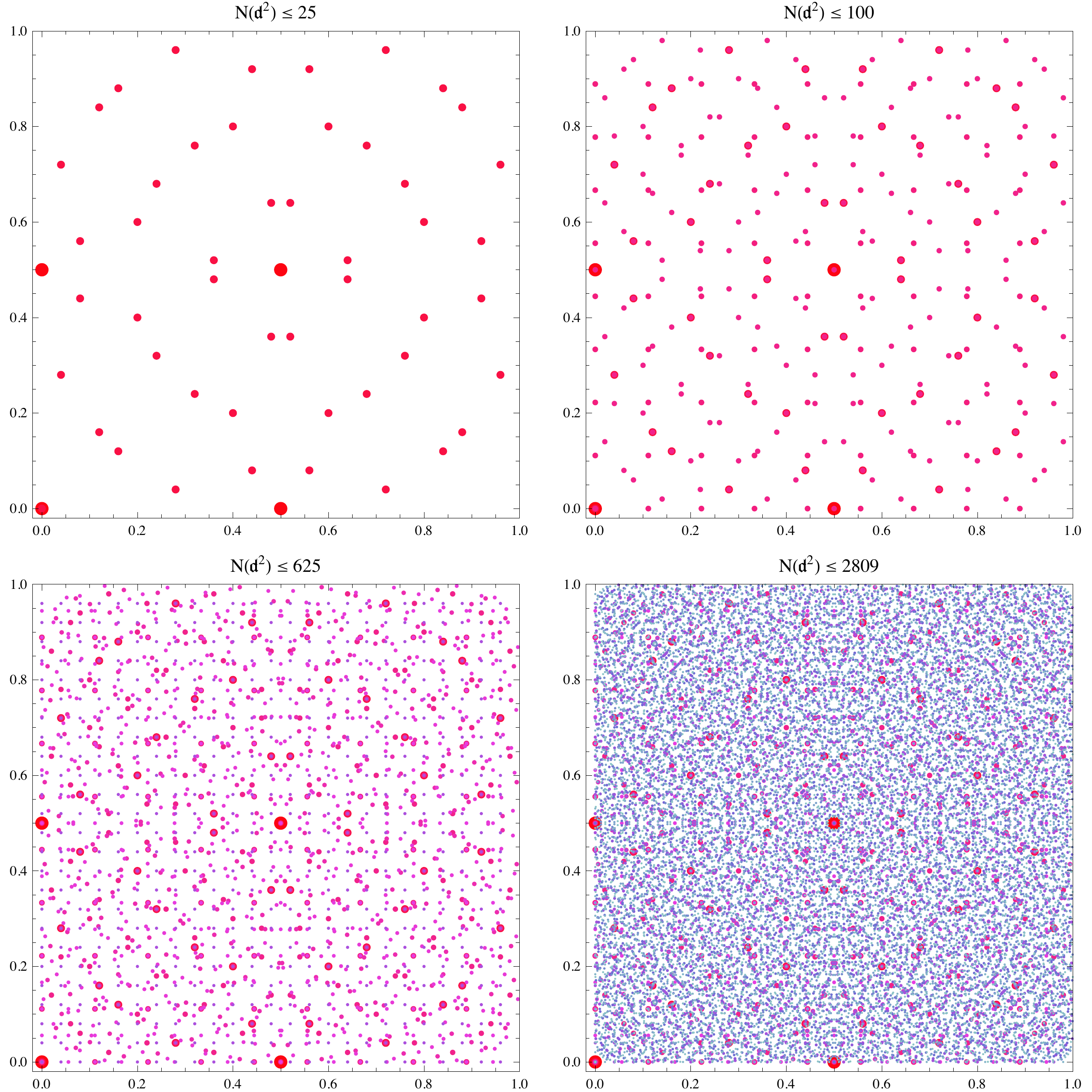}
\caption{Successive approximations $\bigcup_{N(\fd^2)\leq D}(\fd^2)^\perp$ of the spectrum $\Lambda$, with $D=25, 100, 625, 2809$. The size 
of points in $(\fd^2)^\perp$ decreases as $N(\fd^2)$ increases.}\label{spectrumLambda}
\end{center}
\end{figure}

\section{Background}\label{sec:background}
\subsection{Ideals in Number Fields}
Let $K$ be a number field of degree $[K:\Q]=d$, and let $\roi$ be its ring of integers. While $\roi$ need not be a principal ideal domain, it is always a Dedekind domain, that is an integral domain whose proper ideals factor (uniquely, up to the order) into a product of prime ideals (see \cite{Narkiewicz, Neukirch}). We will denote ideals in $\roi$ by $\fa, \mathfrak b, \mathfrak c, \mathfrak d, \mathfrak n$, and prime ideals by $\fp, \mathfrak q$. The sum of two ideals is defined as $\fa+\mathfrak b=\{a+b\colon a\in\fa,\:b\in\mathfrak b\}$, the product as $\fa\mathfrak b=\{\sum_{i=1}^na_ib_i,\: a_i\in\fa,\: b_i\in\mathfrak b,\:n\in\N \}$, and these are ideals.
We say that \emph{$\fa$ divides $\mathfrak b$} 
if and only if $\mathfrak b=\fa \mathfrak c$ for some ideal $\mathfrak c$ or, equivalently, if and only if $\fa\supseteq \mathfrak b$.
For any two ideals $\fa, \mathfrak b$ we define their \emph{greatest common divisor} as the smallest ideal containing both $\fa$ and $\mathfrak b$, that is $\gcd(\fa, \mathfrak b)=\fa+\mathfrak b$. The \emph{least common multiple} of $\fa$ and $\mathfrak b$ is defined as the largest ideal contained in both $\fa$ and $\mathfrak b$, that is $\lcm(\fa, \mathfrak b)=\fa\cap\mathfrak b$. 
Let $k\geq2$. 
The indicator $\mu^{(k)}$ of $k$-free ideals satisfies
%
\be
\mu^{(k)}(\fa)=\sum_{\fb^k\supseteq \fa}\mu(\fb).\label{mu-m-mu}
\ee

We will say that two integers $a,b\in\roi$ are \emph{congruent modulo the ideal $\fa$} (denoted by $a\equiv b \bmod\fa$) if and only if $a+\fa=b+\fa$ in the finite ring $\roi/\fa$. 
The \emph{algebraic norm 
} of a nonzero ideal $\fa$ is defined as $\an{\fa}=\#\roi/\fa=[\roi:\fa]$, where $\#\cdot$ denotes the cardinality of a finite set. There is also a notion of \emph{norm} for elements of $K$. Let $a\in K$ and let $m_a\colon K\to K$ be the $\Q$-linear map $m_a(b)=ab$. The norm $N_{K/\Q}(a)$ is defined as the determinant of $m_a$. If $K/\Q$ is Galois, then $N_{K/\Q}$ equals the product of the conjugates of $a$. Note that $N_{K/\Q}(a)$ need not be positive. When $\roi$ is a principal ideal domain, and $\fa=(a)$, then $\an{\fa}=N_{K/\Q}(a)$.
The \emph{Dedekind zeta function} is given, for $\Re s>1$,  by
\be\zeta_K(s)=\sum_{\fa}\an{\fa}^{-s}=\prod_{\fp}\left(1-\an{\fp}^{-s}\right)^{-1},\label{eq:zeta}
\ee
where the sum ranges over all nonzero ideals of $\roi$ and the product over the prime ones. Let us also set $\an{a}=\an{(a)}$.

\subsection{Group Actions with Pure Point Spectrum}\label{sec:purepoint}
Let us recall the notions of ergodic and pure point spectrum actions relevant to our setting. Let $G$ be  a separable locally compact abelian
group and let $S$ be a standard Borel $G$-space. Let $\mu$ be a $\sigma$-finite (left) $G$-invariant measure (that is $\mu(x E)=\mu(E)$ for all $x\in G$ and all Borel subset $E$ of $S$). One says that the action $G\curvearrowright(S,\mu)$ is \emph{ergodic} if whenever $E$ is a Borel set in $S$ with $\mu(E)\neq0\neq\mu(E^c)$, then for some $x\in G$ we have $\mu(E\vartriangle xE)>0$. Let $U$ be the unitary representation of $G$ on $L^2(S,\mu)$ given by $(U_xf)(s)=f(xs)$. When $U$ is a discrete direct sum of finite-dimensional irreducible representations one says that the action $G\curvearrowright(S,\mu)$ has \emph{pure point spectrum}. This means that there exists an orthonormal 
basis $\{\Phi_j\}_{j=1}^\infty$ for $L^2(S,\mu)$ and a countable subgroup $\Gamma=\{\chi_j\}_{j=1}^\infty$ of $\hat G$ such that $U_x\Phi_j=\chi_j(x)\Phi_j$. The group $\Gamma$ is referred to as the \emph{spectrum} of the action.

One can construct ergodic actions with pure point spectrum as follows. Let $K$ be a compact group, $H\subseteq K$  a closed subgroup, $\varphi$ a continuous homomorphism of $G$ onto a dense subgroup of $K$. Define the action $G\curvearrowright(K/H,\mu_H)$ by $x(k+H)=\varphi(x)k+H$, where $\mu_H$ is the unique measure on  $K/H$ such that $\mu_H(K/H)=1$, $G$-invariant under left multiplication. One can check that the latter is ergodic (by transitivity of the action) and has pure point spectrum (the unitary representation $U$ of $G$ on $L^2(K/H,\mu_H)$ is given by $U_x=V_{\varphi(x)}$, where $V$ is the unitary representation of $K$ on $L^2(K/H,\mu_H)$; since $V$ is a subrepresentation of the regular representation of $K$, it decomposes into a direct sum of finite-dimensional irreducible representations by the Peter-Weyl Theorem). Mackey \cite{Mackey-1964} proved that, modulo removing null sets in $S$ and $K/H$, every ergodic action with pure point spectrum can be realized as above. His work generalized the 
classical theory by von Neumann \cite{vonNeumann-1932}  and Halmos and von Neumann \cite{Halmos-vonNeumann-1942} where $G=\Z$. In all these cases, for actions with pure point spectrum, the isomorphism class is uniquely determined by its spectrum.

\section{Arithmetical Pattern Problems for $k$-Free Ideals}\label{sec2-mirsky-like}




%
%
%


We need a notion of size on \roi\ with the property that any ball of finite radius is finite. This is in general not true for the algebraic norm $N$, as there are number fields whose group of units is infinite.
To avoid this problem, we consider a \emph{geometric norm} on $\|\cdot\|$ by viewing $\roi$ as a vector space over $\Q$.
Let us fix $d$ generators for $\roi$, i.e.\ elements $e_1,\ldots,e_d\in\roi$ such that $\Z[e_1,\dots,e_d]=\roi$, and thus define the 
isomorphism $\iota\colon (\Z^d,+)\to(\roi,+)$, $(\alpha_1,\ldots,\alpha_d)\mapsto a=\alpha_1e_1+\ldots+\alpha_d e_d$. 
For an element $a\in \roi$ let $\| a\|=|\alpha_1|+\ldots+|\alpha_d|$ 
be the $L^1$ norm induced from $\Z^d$. Our results do not 
depend on the choice of $\iota$, except for implied constants in error terms. 


Let $B_x=\{a\in\roi \colon \|a\|\leq x\}$ denote the ball or radius $x$ in $\roi$, with respect to this geometric norm. 
Suppose $\mathfrak a = \langle \sum_{j} k_{ij} e_j\rangle_{i=1}^d=\langle  v_i\rangle_{i=1}^d.$ For any choice of generators we call the set $\Delta=\{\sum_j \eps_{ij}v_j\colon \eps_{ij}=0\text{ or }1\}$ a \emph{cell} of $\mathfrak a$. The \emph{diameter} of an ideal $\mathfrak a$, written $\diam \mathfrak a$, is defined by $$\min_{\Delta \text{ is a cell of }\mathfrak a}\left(\max_{a\in\Delta}\| a\|\right).$$ 


We introduce several abbreviations to simplify notation. We write $\underline{a}=(a_1,\dots,a_s)\in (\roi)^s$, $\underline{\mathfrak n}=(\mathfrak n_1,\dots,\mathfrak n_s)$ for $s$-tuples of ideals of \roi, $\underline{\mathfrak n}^k=(\mathfrak n_1^k,\dots,\mathfrak n_s^k)$ for $s$-tuples of powers of ideals, and $\mu(\underline{\mathfrak n})=\prod_{i=1}^s\mu(\mathfrak n_i)$ for the product M\"obius function. We also tacitly set the range for integers $i$ and $j$ to be $\{1,\dots,s\}$. Define the function $D$ by 
\beq D(\mathfrak a\mid \underline a)=\#\{b    \bmod \mathfrak a\mid b\equiv a_i    \bmod \mathfrak a \text{ for some }i\}\label{eq: D}\eeq
and more generally set 
\beq 
D(\mathfrak a,\mathfrak b\mid \underline a)=
\#\{b    \bmod  \mathfrak a\mid b\equiv a_i    \bmod \mathfrak a \text{ for some }i \text{ and }b \in\gcd(\mathfrak a,\mathfrak b)\}.
\eeq

\begin{prop}[Existence of correlation functions]\label{prop-Mirsky-like-formula}
For every $r\geq1$ and every  $a_1,\ldots,a_r \in\roi$, the limit 
\beq
c_{r+1}(a_1,\ldots,a_r)=\lim_{x\to\infty}\frac{1}{\# B_x}\sum_{a\in B_x}\mu^{(k)}(a)\mu^{(k)}(a+a_1)\cdots\mu^{(k)}(a+a_r)
\label{limit-c_r-quote}
\eeq
exists and
\beq
c_{r+1}(a_1,\ldots, a_r)=\prod_{\fp}\left(1-\frac{D(\fp^k \mid 0,a_1,\dots,a_r)}{\an{\fp^k}}\right),\label{general-c_r+1-Mirsky}
\eeq
where $D$ is as in equation \eqref{eq: D}.
\end{prop}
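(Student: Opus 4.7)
The plan is to expand each factor via the M\"obius identity $\mu^{(k)}(a)=\sum_{\mathfrak d^k\mid (a)}\mu(\mathfrak d)$ (summed over squarefree integral ideals), interchange summations, and reduce the inner count to an equidistribution of $\roi$-lattice points in congruence classes. Combined with a truncation and an Euler factorisation this yields the claimed product.

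Setting $a_0=0$ for notational symmetry, I would first write
\[
\prod_{i=0}^{r}\mu^{(k)}(a+a_i)=\sum_{\underline{\mathfrak d}}\mu(\underline{\mathfrak d})\,\mathbf{1}\bigl[\mathfrak d_i^k\mid (a+a_i)\text{ for all }i\bigr],
\]
sum over $a\in B_x$, swap orders, and truncate at $\max_i N(\mathfrak d_i)\le Y$. On the truncated piece a standard lattice-point estimate
\[
\#\{a\in B_x:a\equiv b\bmod \mathfrak m\}=\frac{\#B_x}{N(\mathfrak m)}+O\bigl(\diam(\mathfrak m)\,x^{d-1}\bigr),
\]
applied with $\mathfrak m=\operatorname{lcm}(\mathfrak d_0^k,\dots,\mathfrak d_r^k)$, turns each inner count into $\#B_x$ times a local density plus an error. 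The tail $\max_i N(\mathfrak d_i)>Y$ is handled by a union bound and $\sum_{N(\mathfrak d)>Y}N(\mathfrak d)^{-k}\ll Y^{1-k}$, which is $o(1)$ for $k\ge 2$.

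For the main term, the Chinese Remainder Theorem decouples the conditions $a\equiv -a_i\pmod{\mathfrak d_i^k}$ prime-by-prime: writing each squarefree $\mathfrak d_i$ as $\prod_{\fp}\fp^{\varepsilon_{i,\fp}}$ with $\varepsilon_{i,\fp}\in\{0,1\}$, the density factorises into an Euler product whose local factor at $\fp$ is
\[
1+\frac{1}{N(\fp^k)}\sum_{\emptyset\ne S\subseteq\{0,\dots,r\}}(-1)^{|S|}\,\mathbf{1}\bigl[a_i\equiv a_j\bmod \fp^k\text{ for all }i,j\in S\bigr].
\]
Partitioning $\{0,\dots,r\}$ into its $D:=D(\fp^k\mid 0,a_1,\dots,a_r)$ equivalence classes modulo $\fp^k$, a non-empty $S$ is consistent iff it lies in a unique such class, and $\sum_{\emptyset\ne S\subseteq P}(-1)^{|S|}=-1$ for each class $P$; hence the local factor collapses to $1-D/N(\fp^k)$, matching \eqref{general-c_r+1-Mirsky}.

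The main obstacle will be balancing truncation and error: $Y=Y(x)$ must grow fast enough that the truncated Euler product approaches the full one, yet slowly enough that the cumulative lattice error $\sum_{\underline{\mathfrak d}:\max N(\mathfrak d_i)\le Y}\diam(\operatorname{lcm}\mathfrak d_i^k)\,x^{d-1}$ remains $o(\#B_x)$. Absolute convergence of the Euler product is automatic from $D\le r+1$ and $\sum_\fp N(\fp)^{-k}<\infty$ for $k\ge 2$, so once the geometric estimates are matched the algebraic identity of the previous paragraph yields both existence and the stated value of the limit.
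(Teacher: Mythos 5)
Your proposal follows essentially the same route as the paper's proof (of the more general Theorem \ref{th:M=S+O}, of which the proposition is the case $\mathfrak b=\roi$): M\"obius expansion of each $\mu^{(k)}$ factor, truncation at a parameter $Y=x^{\alpha}$, the lattice-point count with $\diam$-error as in Lemmata \ref{lem:mirsky2}--\ref{lem:mirsky3}, and the Euler factorisation whose local factor collapses to $1-D/N(\fp^k)$ via exactly your inclusion--exclusion over residue classes modulo $\fp^k$ (Lemmata \ref{lem:mirsky4}--\ref{lem:mirsky6}). The only step you leave implicit is that the tail over tuples with some $N(\mathfrak d_j)>Y$ also needs the divisor bound $\prod_{i\neq j}d((a+a_i))\ll_\eps x^{\eps}$ to control the remaining indices, which is precisely how the paper treats its $\Sigma_2$.
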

We shall refer to $c_{r+1}$ as the \emph{$(r+1)$-st correlation function for the set of $k$-free integers in \roi}; we will not indicate the dependence on $k$ explicitly. 
By taking $r=1$ and $a_1=0$ in Proposition \ref{prop-Mirsky-like-formula}, we have the 
well known
\begin{corollary}\label{cor-density-k-free}  
The asymptotic density of $k$-free integers in $\roi$ is 
\beq
c_2(0)=
\lim_{x\to\infty}\frac{1}{\#B_x}\sum_{a\in B_x}\mu^{(k)}(a)=
\prod_{\fp}\left(1-\frac{1}{\an{\fp^k}}\right)=\frac{1}{\zeta_K(k)},\label{density-squarefree}
\eeq
where $\zeta_K$ is as in \eqref{eq:zeta}.
\end{corollary}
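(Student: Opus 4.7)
The plan is to specialize Proposition~\ref{prop-Mirsky-like-formula} with $r=1$ and $a_1=0$, then unpack the resulting Euler product. Since $\mu^{(k)}$ is the indicator of $k$-free elements, it is $\{0,1\}$-valued and idempotent, so $\mu^{(k)}(a)\mu^{(k)}(a+0)=\mu^{(k)}(a)^2=\mu^{(k)}(a)$. The inner sum in \eqref{limit-c_r-quote} therefore collapses to $\sum_{a\in B_x}\mu^{(k)}(a)$, and the proposition yields both existence of the limit and the identity
\[
c_2(0)\;=\;\prod_{\fp}\left(1-\frac{D(\fp^k\mid 0,0)}{\an{\fp^k}}\right).
\]

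Next I would evaluate $D(\fp^k\mid 0,0)$ directly from the definition~\eqref{eq: D}: the set $\{b\bmod \fp^k:\ b\equiv 0\bmod\fp^k\text{ or }b\equiv 0\bmod\fp^k\}$ is just $\{0\}$, so $D(\fp^k\mid 0,0)=1$. Substituting gives $c_2(0)=\prod_{\fp}\bigl(1-\an{\fp^k}^{-1}\bigr)$, which is the middle expression in the statement.

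Finally, to match the last equality I would invoke the Euler product for the Dedekind zeta function, $\zeta_K(s)=\prod_{\fp}(1-\an{\fp}^{-s})^{-1}$, which converges absolutely for $\Re s>1$ and so is valid at $s=k\geq 2$. Combined with total multiplicativity of the absolute norm on ideals ($\an{\fp^k}=\an{\fp}^k$), this gives $\prod_{\fp}\bigl(1-\an{\fp^k}^{-1}\bigr)=\zeta_K(k)^{-1}$.

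There is no genuine obstacle here: the analytic content has already been absorbed into Proposition~\ref{prop-Mirsky-like-formula}, and the three remaining steps—collapsing the $r=1$ sum by idempotence of $\mu^{(k)}$, the one-line computation $D(\fp^k\mid 0,0)=1$, and recognition of the Euler product—are each immediate. The only item worth stating explicitly is the idempotence $\mu^{(k)}(a)^2=\mu^{(k)}(a)$, which justifies applying the $r=1$ case of the proposition to a sum that ostensibly involves only one factor of $\mu^{(k)}$.
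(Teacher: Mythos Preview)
Your proposal is correct and follows exactly the approach indicated in the paper, which derives the corollary in one line by specializing Proposition~\ref{prop-Mirsky-like-formula} to $r=1$, $a_1=0$. You have simply spelled out the three immediate steps (idempotence of $\mu^{(k)}$, $D(\fp^k\mid 0,0)=1$, and the Euler product for $\zeta_K$) that the paper leaves implicit.
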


We will actually prove a more general version of Proposition \ref{prop-Mirsky-like-formula}, namely a quantitative asymptotic statement on the frequencies of arbitrary binary configurations in $\{\mu^{(k)}(a)\}_{a\in\roi}$ with an additional divisibility constraint (Theorem \ref{th:M=S+O} below). Set 
\beq M_k(x;\mathfrak b; \underline a)=\sum_{\substack{{a\in \mathfrak b}\\{\| a\|\le x}}} \mu^{(k)}(a+a_1)\dots \mu^{(k)}(a+a_s). 
\eeq


\begin{theorem}\label{th:M=S+O}
If $D(\mathfrak p^k,\mathfrak b\mid \underline a)=\frac{N(\mathfrak p^k)}{N(\gcd(\mathfrak p^k,\mathfrak b))}$ 
for some $\mathfrak p$, then $$M_k(x;\mathfrak b;\underline a)=0.$$ Otherwise 
\beq M_k(x;\mathfrak b;\underline a)=S_{k,\mathfrak b}(\underline a) x^d+O_{\iota, \eps}(x^{d-\frac{k-1}{k+2sk-1}+\eps}) \eeq 
for positive $S_{k,\mathfrak b}(\underline a)$ computed in \eqref{eq:S} and every $\eps>0$. 
\end{theorem}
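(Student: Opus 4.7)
The plan is a Mirsky-style Möbius inversion adapted to \roi. First, apply $\mu^{(k)}(a)=\sum_{\mathfrak{n}^k\mid(a)}\mu(\mathfrak{n})$ in each of the $s$ factors of $M_k$ and interchange summations:
\[
M_k(x;\mathfrak{b};\underline{a}) \;=\; \sum_{\underline{\mathfrak{n}}}\mu(\underline{\mathfrak{n}})\,T(x;\mathfrak{b};\underline{\mathfrak{n}};\underline{a}),
\]
where $T(x;\mathfrak{b};\underline{\mathfrak{n}};\underline{a})$ counts $a\in\mathfrak{b}$ with $\|a\|\leq x$ and $\mathfrak{n}_i^k\mid(a+a_i)$ for every $i$. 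The simultaneous congruences $a\equiv-a_i\pmod{\mathfrak{n}_i^k}$ combined with $a\in\mathfrak{b}$ either are inconsistent, giving $T=0$, or define a union of residue classes modulo $\mathfrak{m}(\underline{\mathfrak{n}}):=\mathfrak{b}\cap\bigcap_i\mathfrak{n}_i^k$. By CRT applied prime-by-prime to $\prod_i\mathfrak{n}_i^k$, the number of admissible classes factors over primes, each local factor expressible through the function $D$ of \eqref{eq: D}. Transporting the count to $\Z^d$ via $\iota$ and applying the standard Lipschitz boundary estimate for lattice points in the $L^1$-ball,
\[
T(x;\mathfrak{b};\underline{\mathfrak{n}};\underline{a}) \;=\; \frac{C(\underline{\mathfrak{n}})}{N(\mathfrak{m}(\underline{\mathfrak{n}}))}\cdot\#B_x \;+\; O_{\iota}\!\bigl(\diam\mathfrak{m}(\underline{\mathfrak{n}})\cdot x^{d-1}\bigr),
\]
with $C(\underline{\mathfrak{n}})$ the admissible-class count.

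The main term is extracted by letting the Möbius sum run over all $\underline{\mathfrak{n}}$; absolute convergence follows by comparison with the Euler product $\prod_{\mathfrak{p}}(1+O(s/N(\mathfrak{p})^k))$, which converges since $k\geq 2$. Multiplicativity of $C$ via CRT rewrites $S_{k,\mathfrak{b}}(\underline{a})$ as an Euler product, recovering formula \eqref{general-c_r+1-Mirsky} and Corollary \ref{cor-density-k-free} in the appropriate specializations. The vanishing statement of the theorem follows immediately in this framework: the hypothesis $D(\mathfrak{p}^k,\mathfrak{b}\mid\underline{a})=N(\mathfrak{p}^k)/N(\gcd(\mathfrak{p}^k,\mathfrak{b}))$ for some $\mathfrak{p}$ says precisely that the residues $-a_i\bmod\mathfrak{p}^k$ cover the entire image of $\mathfrak{b}$ in $\roi/\mathfrak{p}^k$ (an image of exactly this cardinality, since $\mathfrak{b}+\mathfrak{p}^k=\gcd(\mathfrak{b},\mathfrak{p}^k)$), so $\mathfrak{p}^k\mid(a+a_i)$ for some $i$ and every $a\in\mathfrak{b}$, forcing $M_k=0$ identically.

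For the quantitative error, I would truncate at a parameter $Y$. On the head $\max_iN(\mathfrak{n}_i)\leq Y$, substituting the asymptotic for $T$ into the Möbius sum produces a boundary contribution bounded, via bounds on $\diam\mathfrak{m}$ and divisor-like estimates for the number of admissible $s$-tuples, by a polynomial-in-$Y$ error times $x^{d-1+\eps}$. On the tail $\max_iN(\mathfrak{n}_i)>Y$, the trivial single-factor bound $T\leq\#\{a\in\mathfrak{b}:\|a\|\leq x,\;\mathfrak{n}_{i_0}^k\mid(a+a_{i_0})\}\ll x^d/N(\mathfrak{n}_{i_0}^k)+x^{d-1}\diam$ for the offending $i_0$, combined with $\sum_{N(\mathfrak{n})>Y}N(\mathfrak{n})^{-k}\ll Y^{-(k-1)+\eps}$, yields $O(x^dY^{-(k-1)+\eps})$. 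Balancing the two at an optimal $Y=x^{\alpha}$ produces the stated exponent $d-\frac{k-1}{k+2sk-1}+\eps$. The hard part will be the quantitative bookkeeping: extracting the exact exponent requires a sharp uniform version of the Lipschitz boundary estimate (with $\iota$-dependence absorbed into the implicit constant) together with uniform control of divisor-type sums over $s$-tuples of ideals of \roi, and careful tracking of the interaction between $N(\mathfrak{m}(\underline{\mathfrak{n}}))$ and $\diam\mathfrak{m}(\underline{\mathfrak{n}})$; the rest is a direct generalization of Mirsky's classical argument from $\Z$ to \roi.
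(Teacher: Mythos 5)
Your proposal is correct and follows essentially the same route as the paper's proof: Möbius inversion of $\mu^{(k)}$, the congruence/lattice-point count with an $O(x^{d-1}\diam)$ boundary term (Lemma \ref{lem:mirsky3}), completion of the truncated main term to the Euler product via multiplicativity (Lemmata \ref{lem:mirsky4}--\ref{lem:mirsky6}), and a cutoff at $Y=x^{\alpha}$ balanced against the tail, yielding $\alpha=\frac{1}{k+2sk-1}$. The one step you state too casually is the tail bound: after isolating the offending index $i_0$ one must still sum over the remaining $s-1$ ideals, which requires bounding, for each fixed $a$, the number of admissible tuples by $\prod_{i\ne i_0}d\bigl((a+a_i)\bigr)\ll_{\eps}x^{\eps}$ using $N\bigl((a+a_i)\bigr)\ll_{\iota}\|a+a_i\|^{d}$ (Lemma \ref{lem:added_lemma}) --- precisely the divisor-type control you flag as remaining bookkeeping, and exactly how the paper closes this step.
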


For ideals $\mathfrak n_1,\dots,\mathfrak n_s$ define the $E$ symbol by \beq E\binom{\underline{\mathfrak n}}{\underline{  a}}
=\begin{cases}1&\text{ if there exists $b$  s.t. } b+a_i\equiv 0\bmod  \mathfrak n_i \text{ for all } i\\0&\text{ otherwise.}\end{cases}\label{eq:chinese1}\eeq

\begin{lemma}\label{lem:mirsky1}
Equation \eqref{eq:chinese1} evaluates to 1 precisely when $$a_i-a_j\equiv0\bmod  \gcd(\mathfrak n_i,\mathfrak n_j)$$ for every $i$ and $j$. In this case there is exactly one $b$ in each residue class modulo $\lcm ( \underline{\mathfrak n}).$
\end{lemma}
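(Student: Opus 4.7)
The plan is to prove both directions of the equivalence, with the reverse implication and the uniqueness claim both following from the generalized Chinese Remainder Theorem for Dedekind domains. The forward direction is immediate: if $b + a_i \equiv 0 \bmod \mathfrak n_i$ for every $i$, then each congruence persists modulo the larger ideal $\gcd(\mathfrak n_i, \mathfrak n_j)$, so $a_i \equiv -b \equiv a_j \bmod \gcd(\mathfrak n_i, \mathfrak n_j)$, which gives the required pairwise compatibility.

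For the converse and the uniqueness statement, I would invoke the following form of the CRT in the Dedekind domain $\roi$: the natural map $\varphi \colon \roi \to \bigoplus_{i=1}^s \roi/\mathfrak n_i$ has kernel $\bigcap_i \mathfrak n_i = \lcm(\underline{\mathfrak n})$ and image equal to the set of tuples $(r_1,\dots,r_s)$ with $r_i \equiv r_j \bmod \gcd(\mathfrak n_i, \mathfrak n_j)$ for all $i,j$. Applied with $r_i = -a_i$, the pairwise compatibility hypothesis places this tuple in the image, yielding $b$; the kernel description then gives exactly one such $b$ in each residue class modulo $\lcm(\underline{\mathfrak n})$, as claimed.

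The substantive step---the image description of $\varphi$---I would prove by induction on $s$. The base case $s=2$ uses the Dedekind identity $\mathfrak n_1 + \mathfrak n_2 = \gcd(\mathfrak n_1, \mathfrak n_2)$: writing $a_1 - a_2 = y_1 + y_2$ with $y_i \in \mathfrak n_i$, the element $b = -a_1 + y_1 = -a_2 - y_2$ satisfies both congruences. The inductive step combines a solution $b'$ for the first $s-1$ congruences with $-a_s$ via the $s=2$ case applied to $\lcm(\mathfrak n_1,\dots,\mathfrak n_{s-1})$ and $\mathfrak n_s$. The point requiring care is verifying the compatibility $b' + a_s \in \gcd(\lcm_{i<s} \mathfrak n_i, \mathfrak n_s)$, which I would deduce from the distributive identity $\gcd(\lcm_{i<s} \mathfrak n_i, \mathfrak n_s) = \lcm_{i<s} \gcd(\mathfrak n_i, \mathfrak n_s)$---valid since the ideal lattice of a Dedekind domain is distributive---combined with the pairwise identities $b' + a_s = (b' + a_i) + (a_s - a_i) \equiv 0 \bmod \gcd(\mathfrak n_i, \mathfrak n_s)$ for each $i<s$.
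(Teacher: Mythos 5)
Your proof is correct and follows essentially the same route as the paper, which simply observes that the natural map $\roi/\lcm(\underline{\mathfrak n})\to\prod_i\roi/\mathfrak n_i$ is an isomorphism onto its image (i.e.\ the generalized Chinese Remainder Theorem) and leaves all details to the reader. You supply those details in full --- the kernel computation, the characterization of the image via pairwise compatibility, and the induction using $\mathfrak n_1+\mathfrak n_2=\gcd(\mathfrak n_1,\mathfrak n_2)$ and distributivity of the ideal lattice --- all of which check out.
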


\begin{proof}It is enough to observe that 
\begin{align*}
\roi/\lcm(\underline{\mathfrak n})&\to \roi/\mathfrak n_1\times\dots\times\roi/\mathfrak n_s\\
a+\lcm(\underline{\mathfrak n})&\mapsto (a+\mathfrak n_1,\dots,a+\mathfrak n_s)
\end{align*} is an isomorphism onto its image.
\end{proof}

\begin{lemma}\label{lem:mirsky2}
Let $$T(x)=\#\{b \in\roi\colon \| b\|\le x \text{ and } b+a_i\equiv 0\bmod  \mathfrak n_i \text{ for all }i\}$$ with notation as before. Then 
\[ \left| T(x)-E\binom{\underline{\mathfrak n}}{\underline{  a}}\frac{x^d }{N(\lcm(\underline{ \mathfrak n}))}\right|=O\left(x^{d-1}\diam\lcm (\underline{\mathfrak n})\right).\]
\end{lemma}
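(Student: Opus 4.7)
My plan is to reduce the statement to a standard lattice-point count and then tile $\mathbb R^d$ with fundamental parallelepipeds of $\lcm(\underline{\mathfrak n})$.

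First I would dispose of the case $E\binom{\underline{\mathfrak n}}{\underline a}=0$: by Lemma~\ref{lem:mirsky1} there is then no $b\in\roi$ satisfying all the congruences simultaneously, so $T(x)=0$ identically and the inequality is trivial. So assume $E=1$ and set $\mathfrak m:=\lcm(\underline{\mathfrak n})$; by Lemma~\ref{lem:mirsky1} again the solutions constitute a single residue class $b_0+\mathfrak m$, whence
\[
T(x)=\#\bigl((b_0+\mathfrak m)\cap B_x\bigr).
\]
Via the identification $\iota$, this becomes the problem of counting a shifted sublattice of $\mathbb Z^d$ of index $N(\mathfrak m)$ inside the $L^1$-ball of radius $x$.

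Next I would choose $\mathbb Z$-generators $v_1,\dots,v_d$ of $\mathfrak m$ realising the minimum in the definition of $\diam\mathfrak m$, and form the closed fundamental parallelepiped $P=\{\sum_j t_j v_j:t_j\in[0,1]\}$. Its diameter is bounded by $\diam\mathfrak m$ (its vertex set is exactly the cell $\Delta$ attaining the minimum), and its Euclidean volume equals $N(\mathfrak m)$ up to an $\iota$-dependent constant. The translates $\{b+P:b\in b_0+\mathfrak m\}$ tile $\mathbb R^d$. Letting $N^\pm(x)$ denote the number of these translates respectively contained in, or merely meeting, $B_x$, the sandwich $N^-(x)\le T(x)\le N^+(x)$ together with $N^-(x)\,\mathrm{vol}(P)\le \mathrm{vol}(B_x)\le N^+(x)\,\mathrm{vol}(P)$ yields
\[
\bigl|T(x)-\mathrm{vol}(B_x)/\mathrm{vol}(P)\bigr|\le N^+(x)-N^-(x).
\]

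Finally I would bound $N^+(x)-N^-(x)$: every translate contributing to this difference must meet $\partial B_x$, hence lies entirely in the tube $\{y\in\mathbb R^d:d(y,\partial B_x)\le\diam\mathfrak m\}$. Since $\partial B_x$ is the boundary of an $L^1$-ball, its $(d-1)$-dimensional measure is $\ll x^{d-1}$, so the tube has volume $\ll x^{d-1}\diam\mathfrak m$; dividing by $\mathrm{vol}(P)$ gives $N^+(x)-N^-(x)\ll x^{d-1}\diam\mathfrak m/N(\mathfrak m)$. Combined with the proportionality $\mathrm{vol}(B_x)/\mathrm{vol}(P)\propto x^d/N(\mathfrak m)$ (with the $\iota$-dependent constant absorbed into the main term and the implied $O$), this gives the claim. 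The only non-formal step is the boundary-tube estimate for the $L^1$-ball, a routine convex-geometry computation; the remaining work is bookkeeping of the $\iota$-dependent geometric constants relating $\mathrm{vol}(B_x)$ to $x^d$ and $\mathrm{vol}(P)$ to $N(\mathfrak m)$.
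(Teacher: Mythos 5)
Your reduction and tiling argument is the standard proof the paper has in mind (it omits it for exactly this reason), and most of it is sound: the dispatch of the case $E=0$, the identification of the solution set with a single coset $b_0+\mathfrak m$ of $\mathfrak m=\lcm(\underline{\mathfrak n})$, the sandwich $N^-(x)\le T(x)\le N^+(x)$, and the boundary-tube bound $N^+(x)-N^-(x)\ll x^{d-1}\diam\mathfrak m/N(\mathfrak m)$. Two small points to tidy: the vertex set of $P$ being the optimal cell only gives $\diam P\le 2\diam\mathfrak m$ (the cell measures distances from the origin, not pairwise distances), and the tube-volume estimate $\ll x^{d-1}\diam\mathfrak m$ requires $\diam\mathfrak m\ll x$; in the opposite regime the lemma holds trivially since both $T(x)$ and the main term are $O(x^d)=O(x^{d-1}\diam\mathfrak m)$. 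Neither affects the argument.

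The genuine gap is your last sentence. In the $\iota$-coordinates $\mathrm{vol}(P)=N(\mathfrak m)$ exactly (the sublattice $\iota^{-1}(\mathfrak m)$ has index $N(\mathfrak m)$ in $\Z^d$, which has covolume $1$), so there is no constant to track there; but for the $L^1$-ball one has $\mathrm{vol}(B_x)=\tfrac{2^d}{d!}x^d$, so your argument yields the main term $\tfrac{2^d}{d!}\cdot\tfrac{x^d}{N(\mathfrak m)}$ rather than $\tfrac{x^d}{N(\mathfrak m)}$. A multiplicative constant on the main term cannot be \emph{absorbed into the implied $O$}: the discrepancy $\bigl(\tfrac{2^d}{d!}-1\bigr)\tfrac{x^d}{N(\mathfrak m)}$ is of order $x^d$ for fixed $\mathfrak m$, while the permitted error is only $O(x^{d-1}\diam\mathfrak m)$; nor is the constant $\iota$-dependent, since both $\|\cdot\|$ and $\mathrm{vol}$ are computed in the same coordinates. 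To obtain the main term literally as $x^d/N(\lcm(\underline{\mathfrak n}))$ you need a counting region of volume $x^d+O(x^{d-1})$ (equivalently $\#B_x=x^d+O(x^{d-1})$), which the $L^1$-ball as defined is not. So either carry the factor $2^d/d!$ explicitly through the statement (and note that the same normalization must then be used in $\#B_x$ in Proposition \ref{prop-Mirsky-like-formula} and in Theorem \ref{th:M=S+O}), or replace $B_x$ by a region with unit volume constant; as written, the final step of your proof does not close.
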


\begin{proof}We omit the proof as it is standard. 

\end{proof}

\begin{lemma}\label{lem:mirsky3}
Let $T(x)$ be the number of solutions to the system 
\[\left\{\begin{aligned}b_i-b&=a_i\text{ for all }i\\ b_i&\in \mathfrak n_i\\ b&\in\mathfrak b\end{aligned}\right.\]
such that $ \|b\|\le x$. Then we have 
\[ \left|T(x)-x^d\frac{ E\begin{pmatrix}\mathfrak b&\underline{\mathfrak n}\\0&\underline{a}\end{pmatrix}}{N(\lcm(\mathfrak b, \underline{\mathfrak n}))}\right|=O\left(x^{d-1}\diam\lcm (\mathfrak b,\underline{\mathfrak n})\right).\]
\end{lemma}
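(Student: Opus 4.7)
The plan is to reduce this lemma to Lemma \ref{lem:mirsky2} by absorbing the condition $b\in\mathfrak b$ into the system of congruences. First I would observe that the system defining $T(x)$ is equivalent to a single condition on $b$: the equations $b_i-b=a_i$ together with $b_i\in\mathfrak n_i$ say exactly that $b+a_i\equiv0\bmod\mathfrak n_i$ for each $i$ (with the $b_i$ then determined as $b_i=b+a_i$), while the condition $b\in\mathfrak b$ is the same as $b+0\equiv0\bmod\mathfrak b$.

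Therefore $T(x)$ equals the number of $b\in\roi$ with $\|b\|\le x$ satisfying the enlarged system of congruences indexed by the $(s+1)$-tuple of ideals $(\mathfrak b,\mathfrak n_1,\ldots,\mathfrak n_s)$ with residue tuple $(0,a_1,\ldots,a_s)$. This is precisely the counting function treated by Lemma \ref{lem:mirsky2}, but for the extended tuple. Applying that lemma yields
\[
\left|T(x)-E\begin{pmatrix}\mathfrak b&\underline{\mathfrak n}\\0&\underline{a}\end{pmatrix}\frac{x^d}{N(\lcm(\mathfrak b,\underline{\mathfrak n}))}\right|=O\bigl(x^{d-1}\diam\lcm(\mathfrak b,\underline{\mathfrak n})\bigr),
\]
which is the claimed bound.

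There is no real obstacle: the content of Lemma \ref{lem:mirsky3} is just the observation that the constraint $b\in\mathfrak b$ is simply one more congruence and fits into the framework of Lemma \ref{lem:mirsky2}; the $E$-symbol with the extra column $\binom{\mathfrak b}{0}$ is then nonzero iff the enlarged compatibility conditions of Lemma \ref{lem:mirsky1} hold, and the modulus governing the main term is correspondingly $\lcm(\mathfrak b,\underline{\mathfrak n})$ instead of $\lcm(\underline{\mathfrak n})$. The only small thing worth spelling out, if desired, is that the map $(b_1,\ldots,b_s,b)\mapsto b$ is a bijection from solutions of the system to $b$'s satisfying the enlarged congruences, so counting one is the same as counting the other.
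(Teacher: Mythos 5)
Your proposal is correct and is exactly the paper's argument: the paper's proof simply says to apply Lemmata \ref{lem:mirsky1} and \ref{lem:mirsky2} with $s$ replaced by $s+1$, which is precisely your reduction of the condition $b\in\mathfrak b$ to the extra congruence $b+0\equiv0\bmod\mathfrak b$. Your spelling out of the bijection $(b_1,\dots,b_s,b)\mapsto b$ is a harmless elaboration of the same idea.
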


\begin{proof}Apply Lemmata \ref{lem:mirsky1} and \ref{lem:mirsky2} with $s$ replaced by $s+1$. 
\end{proof}

\begin{lemma}\label{lem:mirsky4}
If a function is multiplicative on ideals, then it is determined by its values at prime powers. That is, if 
\[\sum_{\underline{\mathfrak n}}|f(\underline{\mathfrak n})|<\infty\] 
and 
\[f(\underline{\mathfrak n})f(\underline{\mathfrak n}')=f(\mathfrak n_1\mathfrak n'_1,\dots,\mathfrak n_s\mathfrak n'_s)\text{ whenever }\gcd(\mathfrak n_i,\mathfrak n'_j)=\roi \text{ for all }i,j,\] 
then 
\[\sum_{\underline{\mathfrak n}}f(\underline{\mathfrak n})=\prod_{\mathfrak p}\chi_{\mathfrak p}\] 
where 
\[\chi_{\mathfrak p}=\sum_{\delta_1,\dots,\delta_s\ge 0}f(\mathfrak p^{\delta_1},\dots,\mathfrak p^{\delta_s}).\]
\end{lemma}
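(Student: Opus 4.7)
The plan is to establish a unique ``prime factorization'' of $s$-tuples of ideals, use the multiplicativity to split $f(\underline{\mathfrak n})$ as a product over primes, and then exchange sum and product using the absolute convergence hypothesis, mimicking the standard derivation of Euler products for Dirichlet series.

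First I would write, for each prime $\mathfrak p$ and each $i$, $\delta_i(\mathfrak p)=v_{\mathfrak p}(\mathfrak n_i)$ for the $\mathfrak p$-adic valuation. By unique factorization of ideals in $\roi$ one obtains the componentwise decomposition
\[
(\mathfrak n_1,\dots,\mathfrak n_s)=\prod_{\mathfrak p}\bigl(\mathfrak p^{\delta_1(\mathfrak p)},\dots,\mathfrak p^{\delta_s(\mathfrak p)}\bigr),
\]
in which only finitely many factors differ from $(\roi,\dots,\roi)$. For distinct primes $\mathfrak p\ne \mathfrak q$ the tuples $(\mathfrak p^{\delta_1(\mathfrak p)},\dots)$ and $(\mathfrak q^{\delta_1(\mathfrak q)},\dots)$ satisfy the cross-coprimality hypothesis, so iterating the multiplicativity relation gives
\[
f(\underline{\mathfrak n})=\prod_{\mathfrak p}f\bigl(\mathfrak p^{\delta_1(\mathfrak p)},\dots,\mathfrak p^{\delta_s(\mathfrak p)}\bigr),
\]
with all but finitely many factors equal to $f(\roi,\dots,\roi)$.

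Next I would address $f(\roi,\dots,\roi)$: setting $\underline{\mathfrak n}=\underline{\mathfrak n}'=(\roi,\dots,\roi)$ in the multiplicativity relation yields $f(\roi,\dots,\roi)^2=f(\roi,\dots,\roi)$, hence this value is $0$ or $1$. In the former case multiplicativity forces $f\equiv 0$ and every $\chi_{\mathfrak p}=0$, so both sides of the claimed identity vanish; in the latter the trivial factors in the product drop out harmlessly.

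Finally I would exchange the sum and the product. For a finite set $S$ of primes, expanding
\[
\prod_{\mathfrak p\in S}\chi_{\mathfrak p}=\prod_{\mathfrak p\in S}\sum_{\delta_1,\dots,\delta_s\ge 0}f(\mathfrak p^{\delta_1},\dots,\mathfrak p^{\delta_s})
\]
and regrouping by the factorization above gives exactly $\sum_{\underline{\mathfrak n}\in\mathcal N_S}f(\underline{\mathfrak n})$, where $\mathcal N_S$ is the set of $s$-tuples whose coordinates are supported on primes in $S$. The hypothesis $\sum_{\underline{\mathfrak n}}|f(\underline{\mathfrak n})|<\infty$ allows one to let $S$ exhaust all primes of $\roi$: the tail $\sum_{\underline{\mathfrak n}\notin\mathcal N_S}|f(\underline{\mathfrak n})|$ tends to $0$, and the same bound dominates the difference between the finite and infinite products, yielding the identity. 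The only place where one must be attentive is this last dominated-convergence/tail-estimate step, which is the standard (and only nontrivial) ingredient in any such Euler-product manipulation.
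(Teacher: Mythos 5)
Your proposal is correct, and it is exactly the standard Euler-product argument (induction over a finite set of primes followed by a tail estimate using absolute convergence) that the paper dismisses with the single line ``follows by a simple induction.'' You supply the details the paper omits --- the componentwise prime factorization of tuples, the treatment of $f(\roi,\dots,\roi)$, and the dominated-convergence step --- all of which are sound.
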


\begin{proof}
Follows by a simple induction.
\end{proof}

\begin{lemma}\label{lem:mirsky5}
Take $t\ge 1$ and $m\ge 2$. Also fix a prime ideal $\mathfrak p$ and $k_1,\dots,k_t\in\roi$ such that \[k_1\equiv\dots\equiv k_t\equiv \rho\bmod  \mathfrak p^k.\] Then 
\beq \sum_{\substack{{\eta_1,\dots,\eta_t\in\{0,1\}}\\{\text{not all zero}}}}(-1)^{\eta_1+\dots+\eta_t} E\begin{pmatrix}\mathfrak b&\mathfrak p^{k\eta_1}&\dots&\mathfrak p^{k\eta_t}\\0&k_1&\dots&k_t\end{pmatrix}=\begin{cases}-1&\text{ if }\rho\in\gcd(\mathfrak p^k,\mathfrak b)\\0&\text{ otherwise.}\end{cases} \eeq
\end{lemma}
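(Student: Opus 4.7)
The plan is to unfold each $E$ symbol via Lemma \ref{lem:mirsky1} and show that, under the hypothesis $k_1\equiv\cdots\equiv k_t\equiv \rho\bmod \mathfrak p^k$, its value depends only on whether $\rho$ lies in $\gcd(\mathfrak p^k,\mathfrak b)$. Once this reduction is in hand, the sum collapses to an elementary binomial identity.

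First, I would fix an $\underline\eta\in\{0,1\}^t$ and apply Lemma \ref{lem:mirsky1} to the tuple of moduli $(\mathfrak b,\mathfrak p^{k\eta_1},\dots,\mathfrak p^{k\eta_t})$ with residues $(0,k_1,\dots,k_t)$. By that lemma, $E=1$ iff the assigned residues agree pairwise modulo the gcds of the corresponding moduli. The ``internal'' conditions $k_i\equiv k_j\bmod \gcd(\mathfrak p^{k\eta_i},\mathfrak p^{k\eta_j})$ are automatic, since the gcd on the right divides $\mathfrak p^k$ and by hypothesis $k_i\equiv k_j\bmod \mathfrak p^k$. So only the ``off-diagonal'' conditions $k_i\equiv 0\bmod \gcd(\mathfrak b,\mathfrak p^{k\eta_i})$ remain. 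When $\eta_i=0$ the modulus is $\roi$ and the condition is vacuous; when $\eta_i=1$, using $k_i\equiv \rho\bmod \mathfrak p^k$ together with $\mathfrak p^k\subseteq\gcd(\mathfrak b,\mathfrak p^k)$ shows that the condition is equivalent to the single statement $\rho\in\gcd(\mathfrak p^k,\mathfrak b)$, which is independent of $i$.

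Next I would collate. If $\rho\notin\gcd(\mathfrak p^k,\mathfrak b)$, every $\underline\eta\neq\underline 0$ has some $\eta_i=1$, so every $E$ symbol in the sum vanishes and the total is $0$. If $\rho\in\gcd(\mathfrak p^k,\mathfrak b)$, every $E$ symbol equals $1$ and the sum collapses to
\[
\sum_{\substack{\underline\eta\in\{0,1\}^t\\ \underline\eta\neq\underline 0}}(-1)^{\eta_1+\cdots+\eta_t}=\prod_{i=1}^t\bigl(1+(-1)\bigr)-1=-1,
\]
matching the stated value.

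The main obstacle is simply bookkeeping: correctly identifying which pairwise compatibility conditions are vacuous and which encode the condition on $\rho$. The key conceptual point is that the uniform hypothesis $k_1\equiv\cdots\equiv k_t\bmod \mathfrak p^k$ makes all conditions among the $\mathfrak p$-power moduli disappear at once, and the surviving conditions coupling $\mathfrak b$ to the $\mathfrak p^{k\eta_i}$s all encode the same statement about $\rho$. Once this is recognized, no tool beyond Lemma \ref{lem:mirsky1} and the identity $(1-1)^t=0$ is required.
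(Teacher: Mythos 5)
Your proof is correct and follows essentially the same route as the paper: both reduce each $E$ symbol with a nonzero $\underline\eta$ to the single condition $\rho\in\gcd(\mathfrak p^k,\mathfrak b)$ via Lemma \ref{lem:mirsky1}, and then evaluate the alternating sum as $(1-1)^t-1=-1$. Your version just spells out the pairwise compatibility bookkeeping that the paper compresses into one line.
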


\begin{proof}
Note that $\eta_i$ is non-zero for at least one $i$, so that 
\[E
\begin{pmatrix}\mathfrak b&\mathfrak p^{k\eta_1}&\dots&\mathfrak p^{k\eta_t}\\
0&k_1&\dots&k_t
\end{pmatrix}
=
E
\begin{pmatrix}
 \mathfrak b& \mathfrak p^k\\
0&\rho
\end{pmatrix}
=
\begin{cases}
 1&\rho\in\gcd(\mathfrak p^k,\mathfrak b)\\
0&\text{otherwise}
\end{cases}
\] by Lemma \ref{lem:mirsky1}. Furthermore 
\[\sum_{\substack{{\eta_1,\dots,\eta_t\in\{0,1\}}\\{\text{not all zero}}}}(-1)^{\eta_1+\dots+\eta_t} =-1,\]
whence the result.
\end{proof}

\begin{lemma}\label{lem:mirsky6}
With notation as before set 
\beq S=S_{k,\mathfrak b}(\underline{a})=\sum_{\underline{\mathfrak n}}\frac{\mu(\underline{\mathfrak n})}{N(\lcm(\mathfrak b,\underline{\mathfrak n}^k))} 
E\begin{pmatrix}\mathfrak b&\underline{\mathfrak n}^k\\0&\underline{a}\end{pmatrix}. \label{eq:S}\eeq Then we have 
\beq 
S=\dfrac1{N(\mathfrak b)}\displaystyle\prod_{\mathfrak p^k\nsupseteq \mathfrak b}\left(1-\frac{N(\gcd(\mathfrak p^k,\mathfrak b))  D(\mathfrak p^k,\mathfrak b\mid \underline a)}
{ N(\mathfrak p^k)  }\right), \eeq
and it vanishes precisely when 
$$D(\mathfrak p^k,\mathfrak b\mid \underline a)=\frac{N(\mathfrak p^k)  }{ N(\gcd(\mathfrak p^k,\mathfrak b))}\text{ for some }\mathfrak p.$$
\end{lemma}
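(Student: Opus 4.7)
The plan is to factor out $N(\mathfrak b)^{-1}$ from $S$ and recognise the rest as an Euler product via Lemma \ref{lem:mirsky4}. First I would rewrite
\[ \frac{1}{N(\lcm(\mathfrak b, \underline{\mathfrak n}^k))} = \frac{N(\gcd(\mathfrak b, \lcm(\underline{\mathfrak n}^k)))}{N(\mathfrak b)\, N(\lcm(\underline{\mathfrak n}^k))}, \]
so that $S = N(\mathfrak b)^{-1} \sum_{\underline{\mathfrak n}} g(\underline{\mathfrak n})$ with
\[ g(\underline{\mathfrak n}) = \frac{\mu(\underline{\mathfrak n})\, N(\gcd(\mathfrak b, \lcm(\underline{\mathfrak n}^k)))}{N(\lcm(\underline{\mathfrak n}^k))}\, E\begin{pmatrix}\mathfrak b & \underline{\mathfrak n}^k \\ 0 & \underline a\end{pmatrix}. \]
Next I would verify that $g$ satisfies the multiplicativity hypothesis of Lemma \ref{lem:mirsky4}: $\mu$ and $N(\lcm(-))$ are multiplicative in $\underline{\mathfrak n}$, $E$ is multiplicative by the Chinese Remainder Theorem underlying Lemma \ref{lem:mirsky1}, and $N(\gcd(\mathfrak b,-))$ decomposes prime by prime. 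Absolute convergence follows from the squarefree support of $\mu$: this bounds $\sum|g|$ by an Euler product dominated by $\prod_{\mathfrak p}(1 + O(N(\mathfrak p)^{-k}))$, which converges for $k \geq 2$.

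Lemma \ref{lem:mirsky4} then gives $S = N(\mathfrak b)^{-1} \prod_{\mathfrak p} \chi_{\mathfrak p}$, and each local factor collapses to a sum over $\underline{\eta} \in \{0,1\}^{s}$ because $\mu$ kills higher prime powers. Separating the $\underline{\eta} = 0$ term (contributing $1$) and noting that for every other $\underline{\eta}$ the ideal $\lcm(\mathfrak p^{k\eta_1},\dots,\mathfrak p^{k\eta_s})$ equals $\mathfrak p^k$, one is left with
\[ \chi_{\mathfrak p} = 1 + \frac{N(\gcd(\mathfrak b, \mathfrak p^k))}{N(\mathfrak p^k)} \sum_{\underline{\eta} \neq 0} (-1)^{\eta_1+\dots+\eta_s} E\begin{pmatrix}\mathfrak b & \mathfrak p^{k\eta_1} & \dots & \mathfrak p^{k\eta_s}\\ 0 & a_1 & \dots & a_s\end{pmatrix}. \]

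The main obstacle is evaluating this alternating sum when the $a_i$ occupy several different residue classes modulo $\mathfrak p^k$, since Lemma \ref{lem:mirsky5} treats only a single class. I would partition $\{1,\dots,s\}$ into the fibres $I_1,\dots,I_m$ of $i \mapsto a_i \bmod \mathfrak p^k$ with representatives $\rho_1,\dots,\rho_m$. By Lemma \ref{lem:mirsky1} the $E$-symbol vanishes unless the support $\{i:\eta_i=1\}$ is contained in a single class $I_\ell$ \emph{and} $\rho_\ell \in \gcd(\mathfrak p^k,\mathfrak b)$; summing over $\underline{\eta}$ class by class and invoking Lemma \ref{lem:mirsky5} within each class yields
\[ \sum_{\underline{\eta}\neq 0}(-1)^{\eta_1+\dots+\eta_s} E = -\#\{\ell : \rho_\ell \in \gcd(\mathfrak p^k, \mathfrak b)\} = -D(\mathfrak p^k, \mathfrak b \mid \underline a) \]
directly from the definition of $D$. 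Substituting back gives $\chi_{\mathfrak p} = 1 - N(\gcd(\mathfrak p^k, \mathfrak b))\, D(\mathfrak p^k, \mathfrak b \mid \underline a)/N(\mathfrak p^k)$.

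It remains to explain the restriction of the product to $\mathfrak p^k \not\supseteq \mathfrak b$. For a prime with $\mathfrak p^k \supseteq \mathfrak b$ one has $\gcd(\mathfrak p^k, \mathfrak b) = \mathfrak p^k$ and $D(\mathfrak p^k,\mathfrak b\mid\underline a) \in \{0,1\}$, so $\chi_{\mathfrak p}$ is either $1$ (omissible) or $0$; the latter case is precisely $D = 1 = N(\mathfrak p^k)/N(\gcd(\mathfrak p^k, \mathfrak b))$, matching the stated vanishing criterion. The Euler product therefore collapses to the claimed form, and $S = 0$ iff some $\chi_{\mathfrak p}$ vanishes, i.e.\ iff $D(\mathfrak p^k, \mathfrak b \mid \underline a) = N(\mathfrak p^k)/N(\gcd(\mathfrak p^k, \mathfrak b))$ for some $\mathfrak p$.
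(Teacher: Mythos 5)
Your proposal is correct and follows essentially the same route as the paper: factor out $N(\mathfrak b)$, apply Lemma \ref{lem:mirsky4} after checking multiplicativity and absolute convergence, and evaluate each local factor by partitioning the $a_i$ into residue classes modulo $\mathfrak p^k$ and invoking Lemma \ref{lem:mirsky5} class by class to obtain $-D(\mathfrak p^k,\mathfrak b\mid\underline a)$. Your closing remark explaining why primes with $\mathfrak p^k\supseteq\mathfrak b$ may be dropped from the product is a detail the paper leaves implicit, but otherwise the argument is the same.
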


\begin{proof}
We claim that if \beq\label{eq:rel_primality}\gcd(\mathfrak n_i,\mathfrak n'_j)=\roi\eeq for all $i$ and $j$, then 
\beq \label{eq:first_claim}
\frac1{N(\mathfrak b)}N(\lcm(\mathfrak b,\underline{\mathfrak n}))\cdot\frac1{N(\mathfrak b)}N(\lcm(\mathfrak b,\underline{\mathfrak n}'))=
\frac1{N(\mathfrak b)}N(\lcm(\mathfrak b,\mathfrak n_1\mathfrak n'_1,\dots,\mathfrak n_s\mathfrak n'_s)) \eeq and 
\beq \label{eq:second_claim}E\begin{pmatrix}\mathfrak b&\underline{\mathfrak n}\\0&\underline{a}\end{pmatrix}\cdot E\begin{pmatrix}\mathfrak b&\underline{\mathfrak n}'\\
0&\underline{a}\end{pmatrix}=E\begin{pmatrix}\mathfrak b&\mathfrak n_1\mathfrak n'_1&\dots&\mathfrak n_s\mathfrak n'_s\\0&a_1&\dots&a_s\end{pmatrix}.\eeq
For each prime ideal $\mathfrak p$ let $\mathfrak p^\lambda$, $\mathfrak p^{\nu_i}$, $\mathfrak p^{\nu'_j}$ be its largest powers that divide $\mathfrak b$, $\mathfrak n_i$, and $\mathfrak n'_j$, respectively. To prove \eqref{eq:first_claim} it is enough to confirm that 
$$\max(\lambda,\nu_1,\dots,\nu_s)+\max(\lambda,\nu'_1,\dots,\nu'_s)-2\lambda=\max(\lambda,\nu_1+\nu'_1,\dots,\nu_s+\nu'_s)-\lambda.$$ From \eqref{eq:rel_primality} it follows that $\nu_i=0$ for all $i$ or $\nu'_j=0$ for all $j$, so the preceding equation is verified.

For the second claim \eqref{eq:second_claim} note that if the left-hand side vanishes, then so does the right-hand side. So suppose the left-hand side doesn't vanish, that is, 
\begin{align*}
 a_i-a_j\in\gcd(\mathfrak n_i,\mathfrak n_j), &\quad a_i\in\gcd(\mathfrak b,\mathfrak n_i)\\
 a_i-a_j\in\gcd(\mathfrak n'_i,\mathfrak n'_j),&\quad a_i\in\gcd(\mathfrak b,\mathfrak n'_i)
\end{align*}
by Lemma \ref{lem:mirsky1}. From \eqref{eq:rel_primality} these conditions are equivalent to 
$$a_i-a_j\in\gcd(\mathfrak n_i\mathfrak n'_i,\mathfrak n_j\mathfrak n'_j), \quad a_i\in\gcd(\mathfrak b,\mathfrak n_i\mathfrak n'_i),$$
and the claim follows by another application of Lemma \ref{lem:mirsky1}.

Now then we write 
\beq
N(\mathfrak b) S=\sum_{\underline{\mathfrak n}} \frac{\mu(\underline{\mathfrak n})N(\mathfrak b)}{N(\lcm(\mathfrak b,\underline{\mathfrak n}^k))} E\begin{pmatrix}\mathfrak b& \underline{\mathfrak n}^k\\0&\underline a\end{pmatrix}=
\sum_{\underline{\mathfrak n}} f(\underline{\mathfrak n}). \eeq
This sum converges absolutely 
\[\sum_{\underline{\mathfrak n}}|f(\underline{\mathfrak n})|\le N(\mathfrak b)\sum \frac1{N(\lcm(\underline{\mathfrak n}^k)}\ll \sum_{\underline{\mathfrak n}}\frac1{N^k(\mathfrak n)}\sum_{\lcm(\underline{\mathfrak n})=\mathfrak n}1
\ll \sum_{\mathfrak n}\frac{d^s(\mathfrak n)}{N^k(\mathfrak n)}\ll \zeta_K(k-\eps)<\infty
\]
since $k\ge 2$ and the divisor function satisfies $d(\mathfrak n)\ll_\eps N^\eps(\mathfrak n).$ By multiplicativity and Lemma \ref{lem:mirsky4} we have 
$$N(\mathfrak b)S=\prod_{\mathfrak p}\chi_{\mathfrak p},$$ with 
\begin{align*}\chi_{\mathfrak p}
&=\sum_{\delta_1,\dots,\delta_s\ge 0}\frac{(-1)^{\delta_1+\dots +\delta_s}N(\mathfrak b)}{N(\lcm(\mathfrak b,\mathfrak p^{k\delta_1},\dots,\mathfrak p^{k\delta_s}))} E\begin{pmatrix}\mathfrak b& \mathfrak p^{k\delta_1} &\dots&\mathfrak p^{k\delta_s}\\0&a_1&\dots&a_s\end{pmatrix} \\
&=1+\frac{N(\gcd(\mathfrak p^k,\mathfrak b))}{N(\mathfrak p^k)}\psi_{\mathfrak p} 
\end{align*}
and 
\beq\label{eq:psi}\psi_{\mathfrak p}=\sum_{\substack{{\delta_1,\dots,\delta_s\in\{0,1\}}\\{\text{not all zero}}}} (-1)^{\delta_1+\dots +\delta_s} E\begin{pmatrix}\mathfrak b & \mathfrak p^{k\delta_1}&\dots&\mathfrak p^{k\delta_s}\\0&a_1&\dots&a_s\end{pmatrix}.\eeq 

We evaluate $\psi_{\mathfrak p}$. Observe that the terms of \eqref{eq:psi} are zero if $a_j$ are not congruent modulo $\mathfrak p^k$ for all $j\in\{i\colon \delta_i=1\}$. For $\rho$ modulo $\mathfrak p^k$ let $t_\rho$ denote the number of integers $a_1,\dots,a_s$ that are congruent to $\rho$. If $t_\rho>0$, denote them by $k_1^{(\rho)},\dots,k_{t_\rho}^{(\rho)}.$ Then using Lemma \ref{lem:mirsky5} we have 
\begin{align*}\psi_{\mathfrak p}&=\sum_{\substack{{\rho\bmod \mathfrak p^k}\\{t_\rho>0}}} \sum_{\substack{{\eta_1,\dots,\eta_{t_\rho}\in\{0,1\}}\\{\text{not all zero}}}} 
(-1)^{\eta_1+\dots +\eta_{t_\rho}} E\begin{pmatrix}\mathfrak b&\mathfrak p^{k\eta_1}&\dots &\mathfrak p^{k\eta_{t_\rho}}\\0& k_1^{(\rho)}&\dots & k_{t_\rho}^{(\rho)}\end{pmatrix}= 
\\&=-\sum_{\substack{{\rho\bmod \mathfrak p^k}\\{t_\rho>0}\\{\rho\in\gcd(\mathfrak p^k,\mathfrak b)}}}1=
-D(\mathfrak p^k,\mathfrak b\mid \underline a). \end{align*} Thus we have 
\[N(\mathfrak b) S=\prod_{\mathfrak p}\left(1-\frac{N(\gcd(\mathfrak p^k,\mathfrak b))}{N(\mathfrak p^k)} D(\mathfrak p^k,\mathfrak b\mid \underline a)\right).\]

It remains to verify the positivity part. Clearly the product vanishes if one of the factors does. Otherwise the factors cannot be less than $1-\frac s{N^k(\mathfrak p)}$ for $N(\mathfrak p)$ large enough, and these give a non-zero product. 
\end{proof}

\begin{lemma}\label{lem:product}
For all $x, y$ in $\roi$ we have 
$$\|xy\|\ll_\iota \|x\| \|y\|.$$
\end{lemma}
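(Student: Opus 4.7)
The plan is to expand $x$ and $y$ in the $\Z$-basis $e_1,\dots,e_d$ and exploit the fact that $\roi$ is closed under multiplication, so each product $e_ie_j$ has a fixed expansion in the basis.

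First I would write $x=\sum_i\alpha_ie_i$ and $y=\sum_j\beta_je_j$, so that $\|x\|=\sum_i|\alpha_i|$ and $\|y\|=\sum_j|\beta_j|$. Since $e_ie_j\in\roi$, there exist integers $c_{ij}^{(k)}$ (depending only on $\iota$) with
\[
e_ie_j=\sum_{k=1}^d c_{ij}^{(k)}e_k.
\]
Setting $C=C(\iota):=\max_{i,j,k}|c_{ij}^{(k)}|$, we get a finite constant depending only on the chosen basis.

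Next I would expand $xy=\sum_{i,j,k}\alpha_i\beta_j c_{ij}^{(k)}e_k$ and apply the triangle inequality to the $L^1$ norm:
\[
\|xy\|=\sum_k\Bigl|\sum_{i,j}\alpha_i\beta_j c_{ij}^{(k)}\Bigr|\le C\sum_k\sum_{i,j}|\alpha_i||\beta_j|=Cd\,\|x\|\|y\|.
\]
This gives the desired bound with implied constant $Cd$, depending only on $\iota$.

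There is no real obstacle; the only thing to note is that the implied constant genuinely depends on $\iota$, since a different choice of basis would change the structure constants $c_{ij}^{(k)}$, consistent with the statement $\ll_\iota$.
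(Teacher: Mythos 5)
Your proof is correct and follows essentially the same route as the paper: expand both factors in the $\Z$-basis defining $\iota$, use the structure constants of $e_ie_j$, and apply the triangle inequality to obtain the bound $Cd\,\|x\|\|y\|$. (If anything, your observation that the $c_{ij}^{(k)}$ are integers is slightly cleaner than the paper's phrasing.)
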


\begin{proof}
Let $\{e_i\}_{i=1}^d$ be the basis used to define $\iota$. 
Then $e_ie_j=\sum_m c^{ij}_m e_m$ for certain $c^{ij}_m\in \roi$. We have 
\begin{align*}
\|xy\|&=\left\|\sum_i x_ie_i\sum_j y_j e_j\right\| =\left\|\sum_{i,j,m} x_i y_j c^{ij}_m e_m\right\|\\
&\le\max|c^{ij}_m|d \sum_{i,j}|x_i y_j|\ll_\iota \left(\sum_i |x_i|\right)\left(\sum_j |y_j|\right)\\
&\ll_\iota \|x\|\|y\|. 
\end{align*}
\end{proof}

\begin{lemma}\label{lem:added_lemma}
For every $a\in\roi\smallsetminus \{0\}$ we have $N(a) \ll_\iota \|a\|^d.$  
\end{lemma}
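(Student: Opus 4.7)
The plan is to identify $N(a)$ with the determinant of the $\Z$-linear map $\mu_a\colon \roi\to\roi$ given by multiplication by $a$, expressed in the basis $\{e_1,\ldots,e_d\}$ used to define $\iota$. By definition of the algebraic norm for a number field, $N(a)=|\det M_a|$, where $M_a\in M_d(\Z)$ is the matrix whose $j$-th column consists of the coordinates of $a e_j$ in the basis $\{e_i\}$. This is the natural vehicle for connecting $N(\cdot)$ to the geometric norm $\|\cdot\|$, since $\|ae_j\|$ is precisely the $L^1$ norm of that column.

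First I would bound the columns of $M_a$. By Lemma \ref{lem:product} applied to $x=a$ and $y=e_j$, we have $\|ae_j\|\ll_\iota \|a\|\|e_j\|\ll_\iota \|a\|$, where the second inequality absorbs the constants $\|e_j\|$ (finitely many, depending only on $\iota$) into the implied constant. In particular every entry of $M_a$ is $O_\iota(\|a\|)$, since each entry is bounded in absolute value by the $L^1$ norm of its column.

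Next I would apply the Leibniz formula:
\[ N(a)=|\det M_a|=\left|\sum_{\sigma\in S_d}\sgn(\sigma)\prod_{j=1}^d (M_a)_{\sigma(j),j}\right|\le d!\cdot \bigl(O_\iota(\|a\|)\bigr)^d\ll_\iota \|a\|^d, \]
which gives the desired bound.

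I do not expect any real obstacle here: the only mildly non-trivial step is the (standard) identification of the field-theoretic norm with $|\det M_a|$, after which the estimate is an immediate consequence of the entrywise bound coming from Lemma \ref{lem:product}. One could equivalently argue via Hadamard's inequality on the columns of $M_a$, or via the embedding formula $N(a)=\prod_\sigma |\sigma(a)|$ combined with the bound $|\sigma(a)|\le \sum_i |\alpha_i||\sigma(e_i)|\ll_\iota \|a\|$; all three routes give the same conclusion with the same dependence on $\iota$.
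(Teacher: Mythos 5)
Your proof is correct, but it takes a genuinely different route from the paper. You identify $N(a)$ with $|\det M_a|$, where $M_a$ is the integer matrix of multiplication by $a$ in the basis $\{e_1,\dots,e_d\}$, bound each column by $\|ae_j\|\ll_\iota\|a\|\|e_j\|\ll_\iota\|a\|$ via Lemma \ref{lem:product}, and conclude with the Leibniz (or Hadamard) expansion of the determinant. The paper instead avoids invoking the identity $N(a)=|\det M_a|$ directly: it fixes a primitive element $b$ of degree $d$, forms the finite-index additive subgroup $\Xi=\Z a+\Z ab+\dots+\Z ab^{d-1}\subseteq (a)$, uses $N(a)\le\#\roi/\Xi$, and then bounds the index by $\prod_{i=0}^{d-1}\|ab^i\|\ll_{\iota,b}\|a\|^d$, again via Lemma \ref{lem:product}. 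The two arguments are close cousins --- the paper's index bound $\#\roi/\Xi\le\prod_i\|ab^i\|$ is itself the same determinant estimate applied to the matrix of the generators of $\Xi$ --- but yours is more direct, needing no auxiliary element $b$ and no discussion of removing the dependence on its choice; what the paper's version buys is that it only uses the interpretation of the ideal norm as an index, $N((a))=\#\roi/(a)$, rather than the characterization of the field norm as a determinant. Both yield the same conclusion with the same dependence on $\iota$.
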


\begin{proof}
Fix $b\in\roi$ whose irreducible polynomial has degree $d$ (it exists by the Primitive Element Theorem). Let $\Xi=\Z[a,ab,ab^2,\dots,ab^{d-1}].$ Observe that $\Xi<\roi$ is a finite index additive subgroup and that $\Xi\subseteq (a), $ implying that $\#\roi/\Xi\ge N((a))$. Using Lemma \ref{lem:product} we  have \[
N(a)\le \#\roi/\Xi\le \prod_{i=0}^{d-1} \| ab^i\| \ll_{\iota } \|a\|^d \prod_{i=0}^{d-1}\|b^i\|\ll_{\iota, b} \|a\|^d,
\] as needed. Note that minimizing $\prod_{i=0}^{d-1}\| b^i\|$ over $b$ with full degree irreducible polynomial will remove the dependence on the choice of $b$. 
\end{proof}

\begin{proof}[Proof of Theorem \ref{th:M=S+O}]Say $D(\mathfrak p^k,\mathfrak b\mid \underline a)=\frac{N(\mathfrak p^m)}{N(\gcd(\mathfrak p^k,\mathfrak b))}$ for some $\mathfrak p$. Then it is easy to see that $a+a_i\in\mathfrak p^k$ for some $i$, proving the first case.

Suppose then that $D(\mathfrak p^k,\mathfrak b\mid \underline a)<\frac{N(\mathfrak p^k)}{N(\gcd(\mathfrak p^k,\mathfrak b))}$ for all $\mathfrak p$. Let $\alpha\in (0,1/k)$ to be chosen later. From the relation (\ref{mu-m-mu}) 
we get that
\begin{align*}
M_k(x;\mathfrak b; \underline a)&=\sum_{\underline{\mathfrak n}}\sum_{\substack{{\| a\|\le x}\\{a\in \mathfrak b}\\{a+a_i\in\mathfrak n_i^k}\\{1\le i\le s}}}  
\mu(\underline{\mathfrak n})=
\sum_{\underline{\mathfrak n}}\mu(\underline{\mathfrak n})
\sum_{  \substack{\| a\| \le x\\ a\in \mathfrak b\\a+a_i\in\mathfrak n_i^k }}1= 
\\&=\Sigma_1+\Sigma_2. 
\end{align*}
The first sum is over $s$-tuples of ideals $\mathfrak n_i$ of norm at most $x^\alpha$, while $\Sigma_2$ includes $s$-tuples where at least one ideal has norm greater than $x^\alpha$. By Lemma \ref{lem:mirsky3},
\begin{align}\notag \Sigma_1 &=\sum_{N(\mathfrak n_i)\le x^\alpha}\mu(\underline{\mathfrak n})\left(x^d\frac{E\begin{pmatrix}\mathfrak b&\underline{\mathfrak n}^k\\0&\underline{a}\end{pmatrix}}{N(\lcm(\underline{\mathfrak n}^k))}+O(x^{d-1}\diam\lcm(\mathfrak b,\underline{\mathfrak n}^k))\right)= 
\\&=\frac{x^d}{N(\mathfrak b)}\displaystyle\prod_{\mathfrak p^k\nsupseteq \mathfrak b}\left(1-\frac{N(\gcd(\mathfrak p^k,\mathfrak b))  D(\mathfrak p^k,\mathfrak b\mid \underline{a})}{ N(\mathfrak p^k)  }\right)+ 
\\&+O\Bigg(x^d\sum_{i=1}^s\sum_{\substack{{N(\mathfrak n_i)>x^\alpha}\\{\underline{\mathfrak n}}}}  \frac1{N(\lcm(\mathfrak b,\underline{\mathfrak n}^k))}\Bigg)+O\Bigg(x^{d-1}  \sum_{N(\mathfrak n_i) \le x^\alpha}  \diam\lcm(\mathfrak b,\underline{\mathfrak n}^k)\Bigg).\label{eq:errors}
\end{align}
The first error term from \eqref{eq:errors} is at most a constant times
\begin{equation}x^d  \sum_{N(\mathfrak n)>x^\alpha}  \frac{d^s(\mathfrak n)}{N^k(\mathfrak n)}\ll_\eps \sum N^{-k+\eps}(\mathfrak n)\left(\frac{N(\mathfrak n)}{x^\alpha}\right)^{k-1-2\eps}\ll x^{d+\alpha(1-k+2\eps)}\zeta_K(1+\eps)
\ll x^{d-\alpha(k-1)+\eps}.\label{eq:error_term1}\end{equation}
The second error term is bounded by 
\begin{align}
\notag &\ll x^{d-1}\sum_{N(\mathfrak n_i)\le x^\alpha} N(\lcm(\mathfrak b,\underline{\mathfrak n}^k))\ll x^{d-1}\sum_{N(\mathfrak n)< x^{\alpha sk}} N(\mathfrak n)d^s(\mathfrak n)  \\
\notag &\ll x^{d-1}\sum N^{1+\eps}(\mathfrak n)\left(\frac{x^{\alpha sk}}{N(\mathfrak n)}\right)^{2+2\eps}\ll x^{d-1+\alpha sk(2+2\eps)}\zeta_K(1+\eps)\\
&\ll x^{d-1+2\alpha sk+\eps}.\label{eq:error_term2}
\end{align}
For $\Sigma_2$ we have 
$$|\Sigma_2|\le \sum_{j=1}^s f_j$$ and for each $j$
\begin{align*}|f_j|
&\le\sum_{\substack{\underline{\mathfrak n}\\{N(\mathfrak n_j)>x^\alpha}}}\sum_{\substack{{\| a\|\le x}\\{a+a_i\in\mathfrak n_i^k}\\{1\le i\le s}\\{a\in\mathfrak b}}} 1 
\ll \sum_{{N(\mathfrak n_j)>x^\alpha}}\sum_{\substack{{a+a_j\in \mathfrak n_j^k}\\{\| a\|\le x}}}\prod_{i\ne j}\sum_{a+a_i\in\mathfrak n_i^k} 1
\le \sum_{{N(\mathfrak n_j)>x^\alpha}} \sum_{\substack{ {a+a_j\in \mathfrak n_j^k}\\{\| a\|\le x}}}\prod_{i\ne j} d((a+a_i)).
\end{align*}
Since $d(\mathfrak n)\ll_\eps N^\eps(\mathfrak n)$, we can bound this by 
\[
\ll_\eps \sum_{{N(\mathfrak n_j)>x^\alpha}} \sum_{\substack{ {a+a_j\in \mathfrak n_j^k}\\{\| a\|\le x}}}\prod_{i\ne j} N^\eps((a+a_i)),\]
and from Lemma \ref{lem:added_lemma} we have
\begin{align}
\notag &\ll x^\eps \sum_{{N(\mathfrak n_j)>x^\alpha}}  \sum_{\substack{{a+a_j\in \mathfrak n_j^k}\\{\| a\|\le x}}}1
\ll x^\eps  \sum_{{N(\mathfrak n_j)>x^\alpha}} \sum_{\substack{{\mathfrak m\subseteq \mathfrak n^k}\\{N(\mathfrak m)\ll x^d}}}1\\
\notag &\ll \sum_{N(\mathfrak n)>x^\alpha}\left(\frac{x^d}{N(\mathfrak n^k)}\right)^{1+\eps}
\ll x^{d+d\eps}\sum_{N(\mathfrak n)>x^\alpha}\frac1{N^{k+k\eps}(\mathfrak n)}\left(\frac{N(\mathfrak n)}{x^\alpha}\right)^{k+k\eps-1-\eps}\\
 &\ll x^{d-\alpha(k-1)+\eps}.\label{eq:error_term3}
\end{align}
The three error terms come from equations \eqref{eq:error_term1}, \eqref{eq:error_term2}, and \eqref{eq:error_term3}; setting them equal gives $\alpha=\frac1{k+2sk-1}$ and proves the Proposition. 
\end{proof}
Proposition \ref{prop-Mirsky-like-formula} 
is a particular case of 
Theorem \ref{th:M=S+O} for $s=r+1$ and $\fb=\roi$.

%


\section{Construction of $\Lambda$ and the action $\Z^d\curvearrowright(\G,\mathrm{Haar})$}\label{Lambda-and-the-action-on-G}
In this section, using \emph{only the second correlation function}, we construct the groups $\Lambda$ and $\G$. 
Then we discuss an action $\Z^d\curvearrowright(\G,\mathrm{Haar})$ which has $\Lambda$ as spectrum.

For an ideal $\fa\subseteq\roi$, let us consider the annihilator $\fa^\perp$, i.e.\ 
the set of unitary characters $\chi\colon\roi\to\S^1$ such that $\chi(a)=1$ 
for all $a\in\fa$, see \cite{Schmidt-Dyn-Sys-Alg-Or}. 
Notice that $\#\fa^\perp
=\#\roi/\fa=N(\fa)$.
Throughout the paper, $\fd$ indicates a square-free ideal; equivalently, $\fd$ can be thought as a finite collection of prime ideals (or \emph{places}).

\begin{lemma}\label{lemma-spectral-measure}
Let us consider the measure 
\be
\nu=\sum_{\mu^2(\fd)=1}\sigma_{\fd}\sum_{\chi\in (\fd^k)^\perp}\delta_{\chi}\nonumber
\ee
on $\widehat{\roi}$, where 
\be\sigma_{\fd}= \displaystyle{\sum_{\substack{ \fb_0,\fb_1\subseteq\roi\\\mu^2(\fb_0)=\mu^2(\fb_1)=1\\\gcd(\fb_0,\fb_1)=\fd}  }\frac{\mu(\fb_0)\mu(\fb_1)}{\an{\lcm(\fb_0,\fb_1)^k}}}\label{def-sigma_d}.\ee
Then $\widehat\nu(a)=c_2(a)$, $a\in\roi$. 
\end{lemma}
We shall refer to $\nu$ as the \emph{spectral measure}.
Before proving this lemma, we need two additional results.
First, it is convenient to have another formula for $\sigma_\fd$ as an Euler product.
\begin{lemma}\label{lem-sigma_d=prod}
\be
\sigma_\fd=\frac{1}{N(\fd^k)}\prod_{\fp\nsupseteq\fd}\!\left(1-\frac{2}{N(\fp^k)}\right).\nonumber
\ee
\end{lemma}
\begin{proof} 
Multiply out the product in the RHS to get the sum (\ref{def-sigma_d}) defining $\sigma_\fd$.
\end{proof}
In particular, Lemma \ref{lem-sigma_d=prod} shows that $\sigma_\fd$ is positive and bounded away from zero and infinity. More precisely
\be
0<\prod_{\fp}\!\left(1-\frac{2}{N(\fp^2)}\right)=\sigma_{\roi}\leq\sigma_{\fd}<\frac{1}{\zeta_K(k)}\nonumber
\ee
and we can also write
\be
\sigma_{\fd}=\sigma_{\roi}\:\prod_{\fp\supseteq \fd}\frac{1}{N(\fp^2)-2}.\label{another-formula-for-sigma_d}
\ee

The second correlation function is the Fourier transform of a spectral measure whose atoms are weighted by the quantities $\sigma_\fd$. The following lemma allows us to write $c_2$ directly in terms of $\sigma_\fd$.
\begin{lemma}\label{lem-c2=sum} Let $a\in\roi$. Then 
\beq
c_2(a)=\sum_{\substack{ 
\fd^k\supseteq(a)\\\mu^2(\fd)=1  }}\sigma_{\fd},\label{lem-c_2(a)=sum-statement}
\eeq
and sum converges absolutely.
\end{lemma}
\begin{proof}
From Proposition \ref{prop-Mirsky-like-formula} we get
\be
D(\fp^k \mid 0,a)=\begin{cases}1&\mbox{if $\fp^k\supseteq (a)$;}\\2&\mbox{otherwise}.\end{cases}\nonumber
\ee
This gives
\be
c_2(a)=\prod_{\fp^k\supseteq (a)}\!\left(1-\frac{1}{N(\fp^k)}\right)\:\prod_{\fp^k\nsupseteq(a)}\!\left(1-\frac{2}{N(\fp^k)}\right).\label{lem-c_2(a)=sum-1}\ee
The sum in the RHS of (\ref{lem-c_2(a)=sum-statement}) converges absolutely by (\ref{another-formula-for-sigma_d}).  By Lemma \ref{lem-sigma_d=prod} we have
\begin{align}
\sum_{\substack{ \fd^k\supseteq(a)\\\mu^2(\fd)=1  }}\sigma_{\fd}&=
\sum_{\substack{ 
\fd^k\supseteq(a)\\ \mu^2(\fd)=1  }} \frac{1}{N(\fd^k)}\prod_{\fp\nsupseteq \fd}\!\left(1-\frac{2}{N(\fp^k)}\right)\nonumber
\\
&=\prod_{\fp}\!\left(1-\frac{2}{N(\fp^k)}\right)\sum_{\substack{ 
\fd^k\supseteq(a)\\\mu^2(\fd)=1  }}\frac{1}{N(\fd^k)}\prod_{\fp\supseteq\fd}\!\left(1-\frac{2}{N(\fp^k)}\right)^{-1}\nonumber
\\
&=\prod_{\fp}\!\left(1-\frac{2}{N(\fp^k)}\right)\sum_{\substack{ 
\fd^k\supseteq(a)\\\mu^2(\fd)=1  }}\prod_{\fp\supseteq\fd}\frac{1}{N(\fp^k)-2}\nonumber
\\
&=\prod_{\fp}\!\left(1-\frac{2}{N(\fp^k)}\right)\:\prod_{\fp^k\supseteq (a)}\!\left(1+\frac{1}{N(\fp^k)-2}\right)\nonumber
\\
&=\prod_{\fp^k\supseteq (a)}\!\left(1-\frac{1}{N(\fp^k)}\right)\:\prod_{\fp^k\nsupseteq(a)}\!\left(1-\frac{2}{N(\fp^k)}\right)=c_2(a),\nonumber
\end{align}
where the last equality comes from (\ref{lem-c_2(a)=sum-1}).
\end{proof}

\begin{proof}[Proof of Lemma  \ref{lemma-spectral-measure}]
Using Lemma \ref{lem-c2=sum} we can write
\be
c_2(a)=\sum_{\mu^2(\fd)=1} C_{\fd}(a),\mbox{ where } C_{\fd}(a)=\begin{cases}\sigma_{\fd},&\mbox{if $\fd^k
\supseteq (a)$;}\\0,&\mbox{otherwise}.\end{cases}\nonumber
\ee
The function $C_{\fd}$ is constant (equal to $\sigma_{\fd}$) on the lattice $\fd^k$ and zero elsewhere. This function on $\roi$ is the Fourier transform of a measure on $\widehat\roi$, given by a sum of Dirac $\delta$-measures at the points in the set $(\fd^k)^\perp$, with equal weights equal to $\sigma_{\fd}/N(\fd^k)$. The formula for the spectral measure $\nu$ and the lemma are proved.
\end{proof}

 Let $\Lambda$ be the support of the spectral measure $\nu$ defined above. It is automatically a group and,  by the Chinese Remainder Theorem for ideals,
\be
\Lambda=\bigcup_{\mu^2(\fd)=1}(\fd^k)^\perp 
\cong 
\bigoplus_{\fp}\roi/\fp^k.\label{Lambda=union}
\ee

Let us remark that the union in (\ref{Lambda=union}) is not disjoint. It will be useful for us to single out the smallest annihilator to which a character belongs. To this extent, let us notice that if $\fd_1\supseteq\fd_2$ then $
(\fd_1^k)^{\perp}\subseteq(\fd_2^k)^\perp$
and 
let us define the \emph{reduced annihilator} as
\be
(\fd^k)^\perp_{\textrm{red}}=\left\{\chi\in(\fd^k)^\perp\colon\chi\in(\fd'^k)^\perp \Rightarrow\fd\supseteq \fd'\right\}.
\ee
In other words 
\[ \chi\in(\fd^k)^\perp_{\mathrm{red}}\iff
\fd=\gcd\{\fd'\colon \mu^2(\fd')=1\mbox{ and }\chi\in(\fd'^k)^\perp\}.
\]

By Pontryagin duality (see, e.g., \cite{Hewitt-Ross}), $\widehat\Lambda$ is isomorphic to the compact abelian group
\be\G=\prod_\fp\roi/\fp^k.\label{def-G}\ee
Elements of $\G$ are coset sequences indexed by the set of prime ideals in $\roi$, i.e.\ ${\bf g}=
(g_{\fp^k}+\fp^k)_\fp$, where $g_{\fp^k}+\fp^k\in\roi/\fp^k$.  Given $\bf h\in\G$, we denote by $\T_{\bf h}$ the translation $\T_{\bf h}(\bf g)=\bf g+\bf h$. The Haar measure on $\G$ is the product of the counting measures on each factor $\roi/\fp^k$ and is defined on the natural Borel $\sigma$-algebra on $\G$. 

We have a $\Z^d$-action on $\Z^d\curvearrowright(\G,\mathrm{Haar})$ as follows: if $v\in\Z^d $ 
and ${\bf g}=(g_{\fp^k}+\fp^k)_{\fp}\in \G$, 
then 
\be v\cdot {\mathbf g}= \left(g_{\fp^k}+\iota(v)\right)_{\mathfrak p}\label{Z^d-action-on-G}.\ee
In other words, $\Z^d$ acts by $d$ commuting translations $\T_{{\bf u}_1},\ldots,\T_{{\bf u}_d}$ on $\G$, where
${\bf u}_i=(e_i+\fp^k)_{\fp}\in\G$.



Let us now discuss the spectrum of the action  (\ref{Z^d-action-on-G}).
For $v\in\Z^d$ let ${\bf U}_v$ be the unitary operator on  $\H=L^2(\G,\mathrm{Haar})$ given by
\be (\mathbf U_v f)(\mathbf g)=f(v\cdot \mathbf g).\nonumber\ee
\begin{prop}\label{prop-spectrum-Z^d-action}
The spectrum of $\Z^d\curvearrowright(\G,\mathrm{Haar})$ is isomorphic to $\Lambda$.
\end{prop}
\begin{proof}
Let $\iota\colon\Z^d\to\roi$ be the isomorphism 
defined in the Section \ref{sec2-mirsky-like}. Let $\mathbf g\in\G$ and for every prime ideal $\fp$ let $g_{\fp^k}+\fp^k\in\roi/\fp^k$ be its projection onto the $\fp^k$-th coordinate. Let $\chi\in(\fp^k)^\perp_{\mathrm{red}}$. Notice that if $a\equiv a'\bmod \fp^k$, then $\chi(a)=\chi(a')$; in other words, $\chi$ is well defined on $\roi/\fp^k$. Let $\xi_{\chi}(\mathbf g)=\chi(g_{\fp^k}+\fp^k)$. It is clear that $(\mathbf U_v\xi_\chi)(\mathbf g)=\chi(\iota(v))\xi_\chi(\mathbf g)$, i.e.\ $\xi$ is an eigenfunction with eigenvalue $\chi(\iota(v))$. If $\chi(\fd^k)^\perp_{\mathrm{red}}$ and $\fd=\fp_1\cdots\fp_s$ (distinct prime ideals), then $\chi=\chi_1\cdots\chi_s$, where $\chi_i\in(\fp_i^k)^\perp_{\mathrm{red}}$; in this case the function $\xi_{\chi}(\mathbf g)=\chi_1(g_{\fp_1^k}+\fp_1^k)\cdots\chi_{s}(g_{\fp_s^k}+\fp_s^k)$ is an eigenfunction for $\mathbf U_v$ with eigenvalue $\chi(\iota (v))$. Since characters are orthonormal with respect to the Haar measure on $\G$, we have that the discrete group $\{\chi\circ\iota\}_{\chi\in\Lambda}\subseteq\widehat{\Z^d}
=\T^d$ is the spectrum 
of the action $\Z^d\curvearrowright(\G,\mathrm{Haar})$ and is clearly isomorphic to $\Lambda$.
\end{proof}

\section{More Formul\ae\ for the Correlation Functions}\label{sec3}

The goal of this section is to prove three results that we will use later. The first one (Proposition \ref{lem:Hall}) is a generalization of a 
theorem by R.R. Hall \cite{Hall-1989}. 
Let $\eta\in\widehat\roi$ be the trivial character, $\eta(a)=1$ for every $a\in\roi$.
%

\begin{prop}\label{lem:Hall}
For every $r\geq1$ and every $a_1,\ldots,a_r\in\roi$ we have
\be
c_{r+1}(a_1\ldots,a_r)=\sum_{\fa_0}\sum_{\fa_1}\cdots\sum_{\fa_r}g(\fa_0)g(\fa_1)\cdots g(\fa_r)
\sum_{\substack{\chi_i\in(\fa_i^k)^\perp_{\mathrm{red}}\\ 0\leq i\leq r\\
\chi_0\chi_1\cdots\chi_r=\eta  }}\chi_0(0)\chi_1(a_1)\cdots\chi_r(a_r),\label{lemma-Hall-like-statement}
\ee
where
\be
g(\fa)=\frac{\mu(\fa)}{\zeta_K(k)}\prod_{\fp\supseteq\fa}\frac{1}{N(\fp^k)-1}.\label{def-g}
\ee
\end{prop}
The following lemmata follow from Proposition \ref{lem:Hall}, and deal with averages of the second and the third correlation functions, weighted by characters. These results are used in Section \ref{sec-spectrum-of-action} when studying the spectral properties of the action $\roi\curvearrowright(X,\Pi)$. Let us also point out that the proofs of Lemmata \ref{lem-g^2(d)} and \ref{lem:average-c_3} are considerably simpler than the proofs of the analogous results in \cite{Cellarosi-Sinai-JEMS}.

\begin{lemma}\label{lem-g^2(d)}
Let $\chi\in(\fd^k)^\perp_{\mathrm{red}}$. Then 
\be
\lim_{x\to\infty}\frac{1}{\#B_x}\sum_{b\in B_x}\chi(b) c_2(b)=g^2(\fd).\nonumber
\ee
\end{lemma}

\begin{lemma}\label{lem:average-c_3}
Let $\chi_1\in(\fd_1^k)^\perp_{\mathrm{red}}$, $\chi_2\in(\fd_2^k)^\perp_{\mathrm{red}}$, and $\chi=\chi_1\chi_2\in(\fd^k)^\perp_{\mathrm{red}}$. Then 
\be
\lim_{\substack{ x\to\infty\\y\to\infty  }}\frac{1}{\#B_x \#B_y}\sum_{b_1\in B_x}\sum_{b_2\in B_y}\chi_1(b_1)\chi_2(b_2) c_3(b_1,b_2)=g(\fd_1)g(\fd_2)g(\fd).\label{lem:average-c_3-statement}
\ee
\end{lemma}

Before discussing the proofs of the two lemmata above, let us give the

\begin{proof}[Proof of Proposition \ref{lem:Hall}]
Since $\chi_0(0)=1$ 
it will be omitted in proof of (\ref{lemma-Hall-like-statement}). We use notation $\underline{\mathfrak a}=(\mathfrak a_0,\dots,\mathfrak a_r)$ as in Section \ref{sec2-mirsky-like}.
Notice that the inner sum in (\ref{lemma-Hall-like-statement}) does not exceed
\be
\sum_{\substack{\chi_i\in(\fa_i^k)^\perp\\ 0\leq i\leq r\\ \chi_0\chi_1\cdots\chi_r=\eta}}1=\frac{N(\fa_0^k\fa_1^k\cdots \fa_r^k)}{N(\lcm(\underline{\fa}^k)}\nonumber
\ee
in absolute value. Moreover, for every ideal $\fa$, $|g(\fa)|\leq \frac{1}{N(\fa^k)}$ and the series
\be
\sum_{\fa_0}\sum_{\fa_1}\cdots\sum_{\fa_r}\frac{1}{N(\lcm(\underline{\fa}^k)}\nonumber
\ee
converges absolutely. Let us evaluate the inner sum in (\ref{lemma-Hall-like-statement}). Let $\fa=\lcm(\underline{\fa}^k)$. Notice that
\be
\frac{1}{N(\fa^k)}\sum_{a\in \roi/\fa^k}\chi_0(a)\chi_1(a)\cdots\chi_r(a)=\begin{cases}1&\mbox{if $\chi_0\chi_1\cdots\chi_r=\eta$;}\\ 0&\mbox{otherwise}.\end{cases}\nonumber
\ee
This allows us to rewrite the inner sum in (\ref{lemma-Hall-like-statement}) as
\begin{multline*}
\frac{1}{N(\fa^k)} \sum_{a\in\roi/\fa^k}\prod_{i=0}^r  \sum_{\chi_i\in(\fa_i^k)^\perp_{\mathrm{red}}}\chi_i(a_i+a)
=\frac{1}{N(\fa^k)}\sum_{a\in\roi/\fa^k}\prod_{i=0}^r\sum_{\substack{ \fb_i\supseteq \fa_i\\ \fb_i^k\supseteq (a_i+a)  }}\mu\!\left(\frac{\fa_i}{\fb_i}\right)N(\fb_i^k)\\
=\frac{1}{N(\fa^k)}\sum_{\fb_0\supseteq\fa_0}\sum_{\fb_1\supseteq \fa_1}\cdots\sum_{\fb_r\supseteq\fa_r}\mu\!\left(\frac{\fa_0}{\fb_0}\right)\mu\!\left(\frac{\fa_1}{\fb_1}\right)\cdots\mu\!\left(\frac{\fa_r}{\fb_r}\right)N^k(\underline{\fb})\sum_{\substack{  a\in\roi/\fa^k\\ a\equiv -a_i\bmod \fb_i^k\\ 0\leq i\leq r  }}1,
\end{multline*}
where $a_0=0$ and $\frac{\fa_j}{\fb_j}$ denotes the unique ideal $\fc_j$ such that $\fa_j=\fb_j \fc_j$.
Observe that 
\be
\sum_{\substack{  a\in\roi/\fa^k\\ a\equiv -a_i \bmod \fb_i^b\\ 0\leq i\leq r  }}1=E\begin{pmatrix}\fb_0^k&\underline{\fb}^k\\ 0&\underline{a}\end{pmatrix}\frac{N(\fa^k)}{N(\lcm(\underline{\fb}^k))}\nonumber
\ee
and thus the inner sum in (\ref{lemma-Hall-like-statement})  equals
\be
\sum_{\fb_0\supseteq\fa_0}\sum_{\fb_1\supseteq \fa_1}\cdots\sum_{\fb_r\supseteq\fa_r}\mu\!\left(\frac{\fa_0}{\fb_0}\right)\mu\!\left(\frac{\fa_1}{\fb_1}\right)\cdots\mu\!\left(\frac{\fa_r}{\fb_r}\right)
\frac{N^k(\underline{\fb})} {N(\lcm(\underline{\fb}^k))}E\begin{pmatrix}\fb_0^k&\underline{\fb}^k\\ 0&\underline{a}\end{pmatrix}\label{proof-hall-1}.
\ee
Notice that the $\fb_i$'s 
are necessarily square-free and thus $\mu(\fa_i/\fb_i)
=\mu(\fa_i)\mu(\fb_i)$.
Let us also observe that, 
for $i=0,\ldots,r$,
\be
\sum_{\substack{ \fa_i\subseteq \fb_i  }}\mu(\fa_i)g(\fa_i)=\frac{\mu^2(\fb_i)}{N(\fb_i^k)}.\label{we-observe}
\ee
To see this, for $\mu^2(\fb_i)=1$, we can write the LHS of (\ref{we-observe}) as
\begin{align*}
&\frac{1}{\zeta_K(k)}\sum_{\fa_i\subseteq \fb_i}\prod_{\fp\supseteq \fa_i}\left(\frac{1}{N(\fp^k)}+\frac{1}{N(\fp^{2k})}+\frac{1}{N(\fp^{3k})}+\ldots\right)\\
&=\frac{1}{\zeta_K(k)}\sum_{\fa_i\subseteq \fb_i}\prod_{\fp\supseteq \fb_i}\left(\frac{1}{N(\fp^k)}+\frac{1}{N(\fp^{2k})}+\ldots\right)\prod_{\substack{ \fp\supseteq \fa_i\\\fp\nsupseteq \fb_i  }}\left(\frac{1}{N(\fp^k)}+\frac{1}{N(\fp^{2k})}+\ldots\right)\\
&=\frac{1}{\zeta_K(k)}\frac{1}{N(\fb_i^k)}\prod_{\fp\supseteq \fb_i}\left(1+\frac{1}{N(\fp^k)}+\frac{1}{N(\fp^{2k})}+\ldots\right)\sum_{\fa_i\subseteq \fb_i}\prod_{\substack{ \fp\supseteq \fa_i\\\fp\nsupseteq \fb_i  }}\frac{1}{N(\fp^k)-1}\\
&=\frac{1}{\zeta_K(k)}\frac{1}{N(\fb_i^k)}\prod_{\fp\supseteq \fb_i}\!\left(1-\frac{1}{N(\fp^k)}\right)^{-1}\prod_{\fp\nsupseteq \fb_i}\!\left(1+\frac{1}{N(\fp^k)-1}\right)=\frac{1}{N(\fb_i^k)},\nonumber
\end{align*}
since the products combined give the Euler product for $\zeta_K(k)$. Alternatively, one can expand the products into sums and match terms with the series defining $\zeta_K(k)$. Now (\ref{proof-hall-1}) and (\ref{we-observe}) imply that  the multiple sum in (\ref{lemma-Hall-like-statement}) equals
\be
\sum_{\fb_0}\sum_{\fb_1}\cdots\sum_{\fb_r}\frac{\mu(\fb_0)\mu(\fb_1)\cdots\mu(\fb_r)}{N(\lcm(\underline{\fb}^k))}E\begin{pmatrix}\fb_0^k&\underline{\fb}^k\\ 0&\underline{a}\end{pmatrix},\nonumber
\ee
and by Lemma \ref{lem:mirsky6} we get the desired statement.
\end{proof}

%

\begin{proof}[Proof of Lemma \ref{lem-g^2(d)}]
Observe that 
\be\lim_{x\to\infty}\frac{1}{\# B_x}\sum_{b\in B_x}\chi(b)\chi_1(b)=\begin{cases}1&\mbox{if $\chi_1=\chi^{-1}$;}\\0&\mbox{otherwise.}\end{cases}\label{obs-average-characters}\ee
By Proposition \ref{lem:Hall}
\begin{align*}
\lim_{x\to\infty}\frac{1}{\#B_x}\sum_{b\in B_x}\chi(b)c_2(b)&= \lim_{x\to\infty}\frac{1}{\#B_x}\sum_{b\in B_x}\chi(b)\sum_{\fa_0,\fa_1}g(\fa_0)g(\fa_1)\!\!\!\!\sum_{\substack{ \chi_i\in(\fa_i^k)^\perp_{\mathrm{red}}\\i=0,1\\\chi_0\chi_1=\eta  }}
\chi_1(b)\\
&=\sum_{\fa_0,\fa_1}g(\fa_0)g(\fa_1)\sum_{\substack{ \chi_i\in(\fa_i^k)^\perp_{\mathrm{red}}\\i=0,1\\\chi_0\chi_1=\eta  }}\lim_{x\to\infty}\frac{1}{\#B_x}\sum_{b\in B_x}\chi(b)\chi_1(b)\\
&=g(\fd)\sum_{\fa_0}g(\fa_0)\sum_{\substack{ \chi_0\in(\fa_0^k)^\perp_{\mathrm{red}}\\\chi_0\chi^{-1}=\eta  }}1=g^2(\fd).
\end{align*}
\end{proof}

\begin{proof}[Proof of Lemma \ref{lem:average-c_3}]
Using (\ref{obs-average-characters}) and Proposition \ref{lem:Hall} the LHS of (\ref{lem:average-c_3-statement}) can be written as
\begin{multline*}
\sum_{\fa_0,\fa_1,\fa_2}g(\fa_0)g(\fa_1)g(\fa_2)
\!\!\!\!\!\sum_{\substack{ \chi_i'\in(\fa_i^k)^\perp_{\mathrm{red}}\\i=0,1,2\\\chi_0'\chi_1'\chi_2'=\eta  }}\lim_{\substack{ x\to\infty\\y\to\infty  }}\frac{1}{\#B_x\#B_y}\sum_{b_1\in B_x}\sum_{b_2\in B_y}\chi_1(b_1)\chi_2(b_2)\chi_1'(b_1)\chi_2'(b_2)\\
=\sum_{\fa_0}g(\fa_0)g(\fd_1)g(\fd_2)\!\!\!\!\!\sum_{\substack{ \chi_0\in(\fa_0^k)^\perp_{\mathrm{red}}\\\chi_0'\chi_1^{-1}\chi_2^{-1}=\eta  }}1=g(\fd_1)g(\fd_2)g(\fd).
\end{multline*}
\end{proof}

\section{The Action $\roi\curvearrowright(X,\Pi)$}\label{sec-spectrum-of-action}

Consider the space $X=\{0,1\}^{\roi}$, whose elements are $\roi$-indexed sequences $x=(x(a))_{a\in\roi}$, equipped with the 
Borel $\sigma$-algebra generated by cylinder sets. Introduce on $X$ the probability measure $\Pi$ 
defined as follows: for every $r\geq0$ and every $a_0, a_1,\ldots,a_r\in\roi$,
\beq
\Pi\left\{x\in X\colon  x(a_0)=x(a_1)=\dots=x(a_r)=1\right\}=\zeta_K(k) c_{r+1}(a_1-a_0, a_2-a_0,\ldots, a_r-a_0),\label{def-Pi-on-cylinders}
\eeq
where $c_{r+1}$ is the $(r+1)$-st correlation function (\ref{general-c_r+1-Mirsky}) associated to $\mathcal F_k$. It is clear that (\ref{def-Pi-on-cylinders}) determines the measure $\Pi$ uniquely. We call $\Pi$ \emph{the natural measure corresponding to the set of $k$-free integers in $\roi$}. 

If we consider the $\roi$-action on $X$ defined as $b\cdot x=(x(a+b))_{a\in\roi}$ for $b\in\roi$ and $x\in X$, then it follows immediately from (\ref{def-Pi-on-cylinders}) that $\Pi$ is invariant under this action. 
We can now reformulate the main result of this paper, of which Theorem \ref{thm-1-intro} is a simplified version.
\begin{theorem}[Main Theorem, second version]\label{main-thm-second-version}
\begin{itemize}
\item[(i)] The action $\roi\curvearrowright(X,\Pi)$  is ergodic and has pure point spectrum given by $\Lambda$.
\item[(ii)] 
The two actions
$\roi\curvearrowright(X, \Pi)$ and $\Z^d\curvearrowright(\G,\mathrm{Haar})$ given in (\ref{Z^d-action-on-G}) are isomorphic.
\end{itemize}
\end{theorem}


For $a\in\roi$, let $U_a$ be the unitary operator on $\mathcal H=L^2(X,\Pi)$ given by
\be
(U_a f)(x)=f(a\cdot x).\nonumber
\ee

The proof of Theorem \ref{main-thm-second-version}-(i) 
 requires us to show that there exists an orthonormal basis $\{\theta_\chi\}_{\chi\in\Lambda}$ for $L^2(X,\Pi)$ such that 
$U_{a}\theta_\chi=\chi(a)\theta_\chi$.
First we will show that $\Lambda$ is contained in the spectrum of the action $\roi\curvearrowright(X,\Pi)$. 
For $\chi\in\Lambda$, let us define the function $\theta_\chi\colon X\to\C$,
\be
\theta_\chi(x):=\lim_{R\to\infty}\frac{1}{\#B_R}\sum_{a\in B_R}\chi(-a)x(a)\label{def-theta_chi}
\ee
\begin{prop}\label{theorem-existence-eigenfunctions}
Let $\chi\in\Lambda$. Then \eqref{def-theta_chi}  defines a function $\theta_\chi\in\mathcal H$, satisfying 
\be
(U_a\theta_\chi)(x)=\chi(a)\theta_\chi(x)\label{theta_chi-eigenfunction-of-U_a}
\ee
for $\Pi$-almost every $x\in X$.
\end{prop}
\begin{proof}
Let $f_0\in\mathcal H$, $f_0(x)=x(0)$, and for $a\in\roi$ let $U_{a,\chi}$ be the unitary operator on $\mathcal H$ defined by
\be(U_{a,\chi}f)(x)=\chi(-a)f(a\cdot x).\nonumber\ee
Since $\roi$ is amenable, von Neumann's mean ergodic theorem for $\roi$-actions holds and implies that the limit
\be
\lim_{R\to\infty}\frac{1}{\#B_R} \sum_{a\in B_R} U_{a,\chi} f_0 \nonumber
\ee
exists in $\mathcal H$. 
For $\Pi$-almost every $x\in X$, we have 
\begin{align}
\lim_{R\to\infty}\frac{1}{\#B_R} \sum_{a\in B_R} U_{ a,\chi} f_0(x)
&=
\lim_{R\to\infty}\frac{1}{\#B_R}\sum_{a\in B_R}\chi(-a)f_0(a\cdot x)\notag
\\&=
\lim_{R\to\infty}\frac{1}{\#B_R}\sum_{a\in B_R}\chi(-a)x(a)=\theta_\chi(x).
\end{align}
Since $\theta_\chi$ is $U_{a,\chi}$-invariant, i.e.\ $(U_{a,\lambda}\theta_\chi)(x)=\theta_\chi(x)$ for $\Pi$-almost every $x\in X$, we get that $\chi(-a)\theta_\chi(a\cdot x)=\theta_\chi(x)$, i.e.
(\ref{theta_chi-eigenfunction-of-U_a}).
\end{proof}
For $a\in\roi$ let us denote by $x(a)$ the function $X\to\{0,1\}$ given by the projection of $x\in X$ onto its $a$-th coordinate. 
We have the 
\begin{prop}\label{eigenfunctions-are-nonzero}
The functions $\theta_\chi$ defined in (\ref{def-theta_chi}) are nonzero.
\end{prop}
\begin{proof}
It is enough to show that the inner products $\langle x(a),\theta_\chi\rangle$, for $a\in\roi$, are  in general nonzero. We actually prove something more, that is an explicit formula for these inner products. 
Let $\chi\in(\fd^k)^\perp_{\mathrm{red}}$. We claim that for every $a\in\roi$ we have
\be
\langle x(a),\theta_\chi\rangle=\zeta_K(k)\chi(a)g^2(\fd),\label{statement-prop-inner-product-x(a)-and-theta_chi}
\ee
where $g$ is the function defined in Proposition \ref{lem:Hall}.
To see this, observe that $\langle x(a),x(b)\rangle=\zeta_K(k) c_2(b-a)$. From (\ref{def-theta_chi}) and Lemma \ref{lem-g^2(d)} we get
\begin{align*}
\langle x(a),\theta_\chi\rangle
&=\lim_{R\to\infty}\left\langle x(a),\frac{1}{|B_R|}\sum_{b\in B_R}\chi(-b)x(b)\right\rangle 
\lim_{R\to\infty}\frac{1}{\#B_R}\sum_{b\in B_R}\chi(b)\langle x(a),x(b)\rangle
\\&=
\lim_{R\to\infty}\frac{1}{\#B_R}\sum_{b\in B_R}\chi(b)\zeta_K(k) c_2(b-a)
\zeta_K(k)\chi(a)\lim_{R\to\infty}\frac{1}{\#B_R}\sum_{b\in B_R}\chi(b)c_2(b)
\\&=
\zeta_K(k)\chi(a)g^2(\fd).
\end{align*}
\end{proof}

Propositions \ref{theorem-existence-eigenfunctions} and \ref{eigenfunctions-are-nonzero} show that $\Lambda$ is contained in the spectrum of the action $\roi\curvearrowright(X,\Pi)$. 
Notice that, since $U_a$ is a unitary operator for every $a\in\roi$, the eigenfunctions $\theta_\chi$ are orthogonal to one another for different $\chi\in\Lambda$.
Introduce the distinguished subspace $H\subseteq \mathcal H$,
\be
H=\overline{\left\{\sum_{a}z_a x(a)\right\}},
\ee
i.e.\ the closure of the set of all complex linear combinations of the $x(a)$'s. Notice that $H$ is $U_a$-invariant for every $a\in \roi$ and, by (\ref{def-theta_chi}), all the 
functions $\theta_\chi$ belong to $H$.
Let us write
\be
x(a)=\sum_{\chi\in\Lambda}\langle x(a),\theta_\chi\rangle\theta_\chi.\nonumber\ee

An important step in the proof of the Main Theorem 
is given by the 
\begin{prop}\label{prop-theta_chi-basis-for-H}
The family of eigenfunctions $\{\theta_\chi\}_{\chi\in\Lambda}$ is a basis for $H$.
\end{prop}
\begin{proof}
By orthogonality, it is enough to show that the eigenfunctions span the space of all linear combinations of the $x(a)$'s. 
Let us show that $H$ is isomorphic to $L^2(\widehat{\roi},\nu)$, where $\nu$ is the spectral measure.

The function $x\mapsto x(0)$ belongs to $L^2(X,\Pi)$ and for every $a\in\roi$, we have $\langle U_ax(0),x(0)\rangle=c_2(a)$. 
 Notice that 
\be
H=\overline{\mathrm{span}\left\{U_ax(0)\colon a\in\roi\right\}}.\nonumber
\ee
By definition of $\nu$, $\hat\nu(a)=c_2(a)$, that is
\be
\int_{\widehat\roi}i(a)(\chi)\mathrm{d}\nu(\chi)=\langle U_ax(0),x(0)\rangle,\nonumber
\ee
where $i\colon\roi\to\widehat{\widehat{\roi}}$ is the canonical isomorphism, $i(a)(\chi)=\chi(a)$. The map $L^2(X,\Pi)\to L^2(\widehat\roi,\nu)$, $U_a x(0)\mapsto i(a)$ is an equivariant isometry (with respect to the $\roi$-action). This extends to a unitary operator $W\colon H\to L^2(\widehat\roi,\nu)$ and yields an isomorphism between the unitary representation $U|_H$ and $V^{\nu}$, where
$
a\mapsto (U|_H)_a:=U_a|_H\colon H\to H,
(U_a|_Hf)(x)=f(a\cdot x)$
and
$
a\mapsto V^\nu\colon L^2(\widehat\roi,\nu)\to L^2(\widehat\roi,\nu),
(V^\nu_a f)(\chi)=i(a)(\chi)f(\chi)=\chi(a)f(\chi).$
In particular, we have
\be
H\cong L^2(\widehat\roi,\nu)=\bigoplus_{\chi\in\Lambda}L^2(\widehat\roi,\sigma_{\chi}\delta_\chi),\label{H-isom-L2(hat-roi-nu)}
\ee
where $$\sigma_\chi=\sum_{\substack{ \mu^2(\fd)=1,  
\chi\in(\fd^k)^\perp  }}\sigma_\fd.$$ Since we have constructed a non-trivial eigenfunction $\theta_\chi$ for each $\chi\in\Lambda$, (\ref{H-isom-L2(hat-roi-nu)}) implies that the family of eigenfunctions $\{\theta_\chi\}_{\chi\in\Lambda}$ spans $H$.
\end{proof}

We want to normalize each eigenfunction to make the family $\{\theta_\chi\}_{\chi\in\Lambda}$ an orthonormal basis for $H$. 
The function $g$ defined in \eqref{def-g} plays again an essential role:
\begin{lemma}\label{lemma-L2-norm-or-theta_chi}
Let $\chi\in(\fd^k)^\perp_{\mathrm{red}}$. Then 
\be
\|\theta_\chi\|=\sqrt{\zeta_K(k)}|g(\fd)|.\nonumber
\ee
\end{lemma}
\begin{proof}
From (\ref{statement-prop-inner-product-x(a)-and-theta_chi}) 
we get
\begin{align*}
\|\theta_\chi\|^2
&=
\langle \theta_\chi,\theta_\chi\rangle=\left\langle\theta_\chi,\lim_{R\to\infty}\frac{1}{\#B_R}\sum_{a\in B_R}\chi(-a)x(a)\right\rangle=
\lim_{R\to\infty}\frac{1}{\#B_R}\sum_{a\in B_R}\chi(a)\overline{\langle x(a),\theta_\chi\rangle}
\\&=
\lim_{R\to\infty}\frac{1}{\#B_R}\sum_{a\in B_R}|\chi(a)|^2\zeta_K(k)g^2(\fd)=\zeta_K(k)g^2(\fd).
\end{align*}
\end{proof}
Let us denote the the normalized eigenfunctions by
$\tilde\theta_\chi=\theta_\chi/\|\theta_\chi\|$. 
If we write $x(a)=\sum_{\chi\in\Lambda}\langle x(a),\tilde\theta_\chi\rangle\tilde\theta_\chi$, then we can retrieve the fact that $\|x(a)\|=1$ using Proposition \ref{prop-theta_chi-basis-for-H} and Lemma \ref{lemma-L2-norm-or-theta_chi}:
\begin{align*}
\|x(a)\|&=\sum_{\chi\in\Lambda}\left|\langle x(a),\tilde\theta_\chi\rangle\right|^2=\sum_{\mu^2(\fd)=1}\sum_{\chi\in (\fd^k)^\perp_{\mathrm{red}}}\zeta_K(k)g^2(\fd)
\\&=
\zeta_K(k)\sum_{\mu^2(\fd)=1}g^2(\fd)\#(\fd^k)^\perp_{\mathrm{red}}
\\&=
\frac{1}{\zeta_K(k)}\sum_{\mu^2(\fd)=1}\prod_{\fp\supseteq \fd}\frac{1}{N(\fp^k)-1}=\frac{1}{\zeta_K(k)}\prod_{\fp}\!\left(1+\frac{1}{N(\fp^k)-1}\right)
\\&=
\frac{1}{\zeta_K(k)}\prod_{\fp}\left(1-\frac{1}{N(\fp^k)}\right)^{-1}=1.\nonumber
\end{align*}
The same argument allows us to provide an approximation of the function $x(a)$ for $a\in\roi$: let $D\geq 1$ and define
\be
x_D(a)=\sum_{\substack{ \mu^2(\fd)=1\\ N(\fd)\leq D  }}\sum_{\chi\in (\fd^k)^\perp_{\mathrm{red}}}\left\langle x(a),\tilde\theta_{\chi}\right\rangle \tilde\theta_\chi.\nonumber
\ee
We have the following estimate 
\beq
\|x(a)-x_D(a)\|^2 =
\sum_{\substack{ \mu^2(\fd)=1\\ N(\fd)> D  }}\sum_{\chi\in (\fd^k)^\perp_{\mathrm{red}}}\left|\left\langle x(a),\tilde\theta_{\chi}\right\rangle\right|^2=\sum{N(\fd)>D}|g(\fd)|= O(D^{-1+\varepsilon})\label{approx-x(a)-by-x_D(a)}
\eeq
for every $\varepsilon>0$.
Another important step in the proof of the Main  Theorem 
is to show that the pointwise product of two eigenfunctions is still an eigenfunction. This is a peculiarity of actions with pure-point spectrum. 
\begin{prop}\label{prop-product-eigenfunctions}
Let $\chi_1\in(\fd_1^k)^\perp_{\mathrm{red}}$ and $\chi_2\in(\fd_2^k)^\perp_{\mathrm{red}}$. Then
\be
\tilde\theta_{\chi_1}\tilde\theta_{\chi_2}=\epsilon\tilde\theta_{\chi},\nonumber
\ee
where $\chi=\chi_1\chi_2\in(\fd^k)^\perp_{\mathrm{red}}$ and $\epsilon=\mu(\fd_1)\mu(\fd_2)\mu(\fd)$.
\end{prop}
\begin{proof}
It is enough to show that for every $a\in\roi$ we have
\be
\left\langle\tilde\theta_{\chi_1}\tilde\theta_{\chi_2},x(a)\right\rangle=\epsilon\left\langle\tilde\theta_\chi,x(a)\right\rangle.\nonumber
\ee
Using the definition (\ref{def-theta_chi}) we have
\[
\theta_{\chi_1}\theta_{\chi_2}=\lim_{\substack{ R_1\to\infty\\ R_2\to\infty  }}\frac{1}{\#B_{R_1}\#B_{R_2}}\sum_{a_1\in B_{R_1}}\sum_{a_2\in B_{R_2}}\chi_1(-a_1)\chi_2(-a_2)x(a_1)x(a_2)\nonumber
\]
and thus
\begin{align*}
\left\langle\theta_{\chi_1}\theta_{\chi_2},x(a)\right\rangle
&=\lim_{\substack{ R_1\to\infty\\ R_2\to\infty  }}\frac{1}{\#B_{R_1}\#B_{R_2}}\sum_{a_1\in B_{R_1}}\sum_{a_2\in B_{R_2}}\chi_1(-a_1)\chi_2(-a_2)\left\langle x(a_1)x(a_2),x(a)\right\rangle=
\\&=
\zeta_K(k)\lim_{\substack{ R_1\to\infty\\ R_2\to\infty  }}\frac{1}{\#B_{R_1}\#B_{R_2}}\sum_{a_1\in B_{R_1}}\sum_{a_2\in B_{R_2}}\chi_1(-a_1)\chi_2(-a_2)c_3(a_1-a,a_2-a)
\\&=
\zeta_K(k)(\chi_1\chi_2)(-a)\lim_{\substack{ R_1\to\infty\\ R_2\to\infty  }}\frac{1}{\#B_{R_1}\#B_{R_2}}\sum_{a_1\in B_{R_1}}\sum_{a_2\in B_{R_2}}\chi_1(-a_1)\chi_2(-a_2)c_3(a_1,a_2)
\\&=
\zeta_K(k)\chi(-a)g(\fd_1)g(\fd_2)g(\fd)\nonumber
\end{align*}
by Lemma \ref{lem:average-c_3}. On the other hand, by (\ref{statement-prop-inner-product-x(a)-and-theta_chi}),
\be
\left\langle \theta_\chi,x(a)\right\rangle=\zeta_K(k)\chi(-a)g^2(\fd).\nonumber
\ee
Therefore
\be
\epsilon=\left\langle\tilde\theta_{\chi_1}\tilde\theta_{\chi_2},x(a)\right\rangle\left\langle\tilde\theta_{\chi},x(a)\right\rangle^{-1}=\frac{g(\fd_1)g(\fd_2)g(\fd)}{|g(\fd_1)||g(\fd_2)|}\frac{|g(\fd)|}{g^2(\fd)}=\mu(\fd_1)\mu(\fd_2)\mu(\fd).\nonumber
\ee
\end{proof}
So far, we have proven that the family of eigenfunctions $\{\tilde\theta_\chi\}_{\chi\in\Lambda}$ is an orthonormal family for the subspace $H$, and that these eigenfunction have a remarkable multiplicative property. Now we want to show that the subspace $H$ coincides with the full Hilbert space $\mathcal H$. This will imply that $\{\tilde\theta_\chi\}_{\chi\in\Lambda}$ is in fact an orthonormal basis for $\mathcal H$ and therefore there is no ``room'' for other eigenspaces. In other words, $\Lambda$ gives \emph{all} the spectrum.
\begin{prop}\label{prop-H=mathcalH}
$H=\mathcal H$.
\end{prop}
\begin{proof}
Let $\mathcal B$ denote the Borel $\sigma$-algebra on $X$. 
The space $(X,\mathcal B, \Pi)$ is Lebesge in the sence of Rokhlin \cite{Rohlin-1947}.
By Proposition \ref{prop-product-eigenfunctions}, the space $H$ is a sub-ring of the unitary ring $\mathcal H=L^2(X,\mathcal B,\Pi)$. Rokhlin's theorem \cite{Rohlin-1948} implies that $H=L^2(X,\mathcal F,\Pi|_{\mathcal F})$, where $\mathcal F$ is a $\sigma$-subalgebra of $\mathcal B$. We claim that, up to null sets, $\mathcal F=\mathcal B$. 
Let us assume for contradiction that $\mathcal F\subsetneq\mathcal B$, i.e.\ there is a positive measure set in $\mathcal B\smallsetminus\mathcal F$. The conditional expectation operator $\E(\cdot|\mathcal F)$ is an orthogonal projection $\mathcal H\to H$, that is $\E(f|\mathcal F)=\mathrm{proj}_H(f)$ for every $f\in\mathcal H$. There exist a function $f\in\mathcal H$ and a constant $\alpha>0$ such that $\|f-\E(f|\mathcal F)\|\geq\alpha$. Let $\varepsilon>0$, and let $f'=\sum_{i=1}^n \alpha_i {\bf 1}_{A_i}$ be a simple function such that $\|f-f'\|\leq\frac{\varepsilon}{2}$, where $A_i$ are cylinders (that is $A_i=\{x\in X\colon  x(a_1^{(i)})x(a_2^{(i)})\cdots x(a_{r^{(i)}}^{(i)})=1\}$ for some $r^{(i)}\geq1$ and  $a_1^{(i)},a_2^{(i)},\ldots,a_{r^{(i)}}^{(i)}\in\roi$).
By (\ref{approx-x(a)-by-x_D(a)}) each function $x(a_j^{(i)})$ can be approximated arbitrarily well by a linear combination of the $\tilde\theta_{\chi}$'s and thus there exists a polynomial in the $\tilde\theta_\chi$'s, say $f''$, such that $\|f'-f''\|\leq \frac{\varepsilon}{2}$. By Proposition \ref{prop-product-eigenfunctions} the function $f''$ can be written as a \emph{linear} combination of the $\tilde\theta_{\chi}$'s, and therefore, by Proposition \ref{prop-theta_chi-basis-for-H}, $f''\in H$.
This means that we are able to find $f\in H$ such that $\|f-f''\|\leq \varepsilon$ and, if $\varepsilon$ is sufficiently small, this contradicts the fact that $\|f-\mathrm{proj}_H(f)\|\geq\alpha$.
\end{proof}
Propositions \ref{prop-theta_chi-basis-for-H} and \ref{prop-H=mathcalH} immediately give  
\begin{corollary}
The family of eigenfunctions $\{\tilde\theta_\chi\}_{\chi\in\Lambda}$ is an orthonormal basis for $\mathcal H$.
\end{corollary}
This fact, together with Propositions \ref{theorem-existence-eigenfunctions} and 
\ref{eigenfunctions-are-nonzero}, yields 
part (i) of Theorem \ref{main-thm-second-version}. Theorem \ref{thm-1-intro} (i) follows immediatley since uniqueness in \eqref{def-Pi-on-cylinders} is guaranteed by Kolmogorov consistency.
%
%
%
%
Finally, Proposition \ref{prop-spectrum-Z^d-action} and
Mackey's theorem \cite{Mackey-1964} imply that 
the two actions $\roi\curvearrowright(X,\Pi)$ and $\Z^d\curvearrowright(\G,\mathrm{Haar})$ are isomorphic.
This constitutes part (ii) of Theorem \ref{main-thm-second-version}, which gives Theorem \ref{thm-1-intro} (ii). 
%

\bibliographystyle{plain}
\bibliography{square-free-bibliography}

\begin{thebibliography}{10}

\bibitem{Baake-Moody-Pleasants-2000}
M.~Baake, R.V. Moody, and P.A.B. Pleasants.
\newblock Diffraction from visible lattice points and {$k$}th power free
  integers.
\newblock {\em Discrete Math.}, 221(1-3):3--42, 2000.
\newblock Selected papers in honor of Ludwig Danzer.

\bibitem{Bergelson-Gorodnik-2004}
V.~Bergelson and A.~Gorodnik.
\newblock Weakly mixing group actions: a brief survey and an example.
\newblock In {\em Modern dynamical systems and applications}, pages 3--25.
  Cambridge Univ. Press, Cambridge, 2004.

\bibitem{Cellarosi-Sinai-JEMS}
F~Cellarosi and Ya.G. Sinai.
\newblock Ergodic properties of square-free numbers.
\newblock {\em J. Eur. Math. Soc. (JEMS)}, 15(4):1343--1374, 2013.

\bibitem{Hall-1989}
R.R. Hall.
\newblock The distribution of squarefree numbers.
\newblock {\em J. Reine Angew. Math.}, 394:107--117, 1989.

\bibitem{Halmos-vonNeumann-1942}
P.R. Halmos and J.~von Neumann.
\newblock Operator methods in classical mechanics. {II}.
\newblock {\em Ann. of Math. (2)}, 43:332--350, 1942.

\bibitem{Heath-Brown-1984}
D.R. Heath-Brown.
\newblock The square sieve and consecutive square-free numbers.
\newblock {\em Math. Ann.}, 266(3):251--259, 1984.

\bibitem{Hewitt-Ross}
E.~Hewitt and K.A. Ross.
\newblock {\em Abstract harmonic analysis. {V}ol. {I}}, volume 115 of {\em
  Grundlehren der Mathematischen Wissenschaften [Fundamental Principles of
  Mathematical Sciences]}.
\newblock Springer-Verlag, Berlin, second edition, 1979.
\newblock Structure of topological groups, integration theory, group
  representations.

\bibitem{Karatzas_Shreve}
Ioannis Karatzas and Steven~E. Shreve.
\newblock {\em Brownian motion and stochastic calculus}, volume 113 of {\em
  Graduate Texts in Mathematics}.
\newblock Springer-Verlag, New York, second edition, 1991.

\bibitem{Koralov_Sinai}
Leonid~B. Koralov and Yakov~G. Sinai.
\newblock {\em Theory of probability and random processes}.
\newblock Universitext. Springer, Berlin, second edition, 2007.

\bibitem{Sarnak-Liu}
J.~Liu and P.~Sarnak.
\newblock The {M}\"{o}bius function and distal flows.
\newblock {\em preprint}.

\bibitem{Mackey-1964}
G.~W. Mackey.
\newblock Ergodic transformation groups with a pure point spectrum.
\newblock {\em Illinois J. Math.}, 8:593--600, 1964.

\bibitem{Mirsky-1949}
L.~Mirsky.
\newblock Arithmetical pattern problems relating to divisibility by {$r$}th
  powers.
\newblock {\em Proc. London Math. Soc. (2)}, 50:497--508, 1949.

\bibitem{Narkiewicz}
W.~Narkiewicz.
\newblock {\em Elementary and analytic theory of algebraic numbers}.
\newblock Springer Monographs in Mathematics. Springer-Verlag, Berlin, third
  edition, 2004.

\bibitem{Neukirch}
J.~Neukirch.
\newblock {\em Algebraic number theory}, volume 322 of {\em Grundlehren der
  Mathematischen Wissenschaften [Fundamental Principles of Mathematical
  Sciences]}.
\newblock Springer-Verlag, Berlin, 1999.

\bibitem{Peckner-2012}
R.~Peckner.
\newblock Uniqueness of the measure of maximal entropy for the squarefree flow.
\newblock {\em preprint}, 2012.

\bibitem{Pleasants-Huck}
P.A.B. Pleasants and C.~Huck.
\newblock Entropy and {D}iffraction of the {$k$}-{F}ree {P}oints in
  {$n$}-{D}imensional {L}attices.
\newblock {\em Discrete Comput. Geom.}, 50(1):39--68, 2013.

\bibitem{Rohlin-1947}
V.A. Rokhlin.
\newblock On the problem of the classification of automorphisms of {L}ebesgue
  spaces.
\newblock {\em Doklady Akad. Nauk SSSR (N. S.)}, 58:189--191, 1947.

\bibitem{Rohlin-1948}
V.A. Rokhlin.
\newblock Unitary rings.
\newblock {\em Doklady Akad. Nauk SSSR (N.S.)}, 59:643--646, 1948.

\bibitem{Sarnak-Mobius-lectures}
P.~Sarnak.
\newblock Three lectures on the {M}\"{o}bius function randomness and dynamics
  ({L}ecture 1).
\newblock
  {\footnotesize\texttt{http://publications.ias.edu/sites/default/files/MobiusFunctionsLectures(2).pdf}}.

\bibitem{Schmidt-Dyn-Sys-Alg-Or}
K.~Schmidt.
\newblock {\em Dynamical systems of algebraic origin}.
\newblock Modern Birkh\"auser Classics. Birkh\"auser/Springer Basel AG, Basel,
  1995.
\newblock [2011 reprint of the 1995 original] [MR1345152].

\bibitem{Tsang-1986}
K.M. Tsang.
\newblock The distribution of {$r$}-tuples of squarefree numbers.
\newblock {\em Mathematika}, 32(2):265--275 (1986), 1985.

\bibitem{vonNeumann-1932}
J.~von Neumann.
\newblock Zur {O}peratorenmethode in der klassischen {M}echanik.
\newblock {\em Ann. of Math. (2)}, 33(3):587--642, 1932.

\bibitem{Zimmer-1976}
R.~J. Zimmer.
\newblock Ergodic actions with generalized discrete spectrum.
\newblock {\em Illinois J. Math.}, 20(4):555--588, 1976.

\end{thebibliography}
\end{document}